\newtheorem{thm}{Theorem}
\newtheorem{lemma}{Lemma}
\newtheorem{rk}{Remark}
\numberwithin{equation}{section} \setcounter{tocdepth}{1}
\newcommand{\bea}{\begin{eqnarray}}
	\newcommand{\eea}{\end{eqnarray}}
\newcommand{\R}{\mathbb{R}}
\newcommand{\C}{\mathbb{C}}
\def\r{\rho}
\def\R{\mathbb{R}}
\def\C{\mathbb{C}}
\begin{document}
	\title [Dynamical systems of M\"obius transformation]
	{Dynamical systems of M\"obius transformation: real, $p$-adic and complex variables}
	
	\author{E.T. Aliev,  U.A. Rozikov}
\address{E.T. Aliev \\ Namangan Institute of Engineering Technology, 7, Kosonsoy street, 160115, Namangan, Uzbekistan.} \email
{aliev-erkinjon@mail.ru}
	
	\address{ U.A. Rozikov$^{a,b,c}$\begin{itemize}
			\item[$^a$] V.I.Romanovskiy Institute of Mathematics,  9, Universitet str., 100174, Tashkent, Uzbekistan;
			\item[$^b$] New Uzbekistan University,
			54,  Mustaqillik ave.,    100007,  Tashkent, Uzbekistan;
			\item[$^c$] National University of Uzbekistan,  4, Universitet str., 100174, Tashkent, Uzbekistan.
	\end{itemize}}
	\email{rozikovu@yandex.ru}
	
	\begin{abstract} In this paper we consider function $f(x)={x+a\over bx+c}$, (where $b\ne 0$, $c\ne ab$, $x\ne -{c\over b}$) on three fields: the set of real,  $p$-adic and complex numbers. We study dynamical systems generated by this function on each field separately and give some comparison remarks.
\begin{itemize}		
	\item	 For real variable case we show that the real dynamical system of the function depends on the parameters $(a,b,c)\in \mathbb R^3$. Namely, we classify the parameters to three sets and prove that: for the parameters from first class each point, for which the trajectory is well defined, is a periodic point of $f$; for  the parameters from second class any trajectory (under $f$) converges to one of fixed points (there may be up to two fixed points); for the parameters from third class any trajectory is dense in $\mathbb R$.  
	
\item For the $p$-adic variable we give a review of known results about dynamical systems of function $f$. Then using a recently developed method we give simple new proofs of these results and prove some new ones related to trajectories which do not converge. 

\item For the complex variables we give a review of known results.
\end{itemize} 	
	\end{abstract}
	\maketitle
	{\bf Mathematics Subject Classifications (2010).} 46S10, 12J12, 11S99, 30D05, 54H20.
	
	{\bf{Key words.}} Rational dynamical systems; fixed point; invariant set; Siegel disk;	complex $p$-adic field.
	
	\section{Introduction}
Dynamical systems generated by rational functions appear in several problems of natural sciences (see, for example \cite{Be}, \cite{Rbp}, \cite{Rpd} and references therein). Depending on the nature of the considered problem in biological or physical systems a rational function can be considered on the field of real, $p$-adic or complex numbers.  

 In this paper we consider rational function $f(x)={x+a\over bx+c}$, where $b\ne 0$, $c\ne ab$, $x\ne -{c\over b}$, on each above mentioned fields. The main goal is to study iterations of the function to each point of the field. Recall that the sequence of iterations of the function to a point is called a discrete-time dynamical system or trajectory of the point. The main problem in the theory of dynamical systems is to describe the set of all limit points of each trajectory.
 
 On the field of complex numbers this problem is well studied (see e.g. \cite{Be}). In the best of our knowledge there is no any paper devoted to complete analysis of the dynamical system of $f$ on the field of real numbers. In this paper we will completely study such real dynamical systems. 
 
In \cite{Avetisov},\cite{Albeverio},\cite{Dubischer},\cite{xrennikov3},\cite{xrennikov4} some applications of $p$-adic dynamical systems to biological and physical systems are given. Rational  $p$-adic dynamical systems appear when studying $p$-adic Gibbs measures \cite{ganikhodjaev},\cite{khamraev},\cite{mukhamedov},\cite{Mukhamedov2}.

In \cite{Mukhamedov3} the behavior of a trajectories under the rational function $f$ on the complex $p$-adic field $\mathbb C_p$ are studied. Siegel disks and basin of attractors are described.

 In this paper we give simple new proofs of these results. Moreover, we give some new results related to the limit points when the trajectory does not converge.  
Moreover, we give a brief review about the dynamical systems of $f$ given on the set of complex numbers. 

\section{Preliminaries}
Let us give main definitions and problems related to dynamical systems (see \cite{HK}, \cite[Chapter 1]{Rmb}).
\subsection{Dynamical systems}
  Let $f$ be a function defined on a topological space $X$.

 Denote $f^n(x)$, meaning $f$ is applied to $x\in X$ iteratively $n$ times.
	
Let $A$ be a subset of $X$. Then $f(A)=\{f(x): x\in A\}$.
	
If $f(A)\subset A$, then $A$ is called an invariant set under function $f$.
	
The point $x\in X$ is called a fixed point for $f$ if $f (x) = x.$

The point $x\in X$ is a periodic point of period $m$ for $f$ if $f^m(x)=x.$ The least positive $m$ for which $f^m(x)=x$ is called the prime period of $x.$

 We denote
the set of all periodic points of (not necessarily prime) period $m$ by ${\rm Per}_m(f)$, and the set of all fixed points by ${\rm Fix}(f).$

For given topological space $X$, $x^{(0)}\in X$ and $f:X\to X$ the discrete-time dynamical system is defined as
\begin{equation}\label{0a2}
	x^{(0)},\ \ x^{(1)}= f(x^{(0)}), \ \ x^{(2)}=f^{2}(x^{(0)}),\ \  x^{(3)}= f^{3}(x^{(0)}),\dots
\end{equation}

{\bf The main problem:} for a given dynamical system is to
describe the limit points of $\{x^{(n)}\}_{n=0}^\infty$ for
arbitrary given $x^{(0)}$.\\

{\bf Aim of the paper:} In this paper we are interested to the dynamical system generated by M\"obius transformation, which is a rational map of the form
$$f(x)={x+a\over bx+c}, \ \ c-ab\ne 0.$$
Here for $X$ we consider three cases: the field of real numbers, p-adic numbers and complex numbers. For each case we collect known results about dynamical systems of the M\"obius transformation and give our new results. Moreover,  we present some comparison of methods and results.

\subsection{$p$-adic numbers}

Let $\mathbb{Q}$ be the field of rational numbers. The greatest common
divisor of the positive integers $n$ and $m$ is denotes by
$(n,m)$. For a fixed prime number $p$, every rational number $x\neq 0$ can be represented as $x=p^r\frac{n}{m}$, where $r,n\in\mathbb{Z}$, $m$ is a
positive integer, $(p,n)=1$, $(p,m)=1$.

The $p$-adic norm of $x$ is defined as
$$
|x|_p=\left\{
\begin{array}{ll}
p^{-r}, & \ \textrm{ for $x\neq 0$},\\[2mm]
0, &\ \textrm{ for $x=0$}.\\
\end{array}
\right.
$$
It has the following properties:

1) $|x|_p\geq 0$ and $|x|_p=0$ if and only if $x=0$,

2) $|xy|_p=|x|_p|y|_p$,

3) the strong triangle inequality
$$
|x+y|_p\leq\max\{|x|_p,|y|_p\}.
$$
More precisely 

3.1) if $|x|_p\neq |y|_p$ then $|x+y|_p=\max\{|x|_p,|y|_p\}$,

3.2) if $|x|_p=|y|_p$, $p\neq 2$ then
$|x+y|_p\leq |x|_p$, and $|x+y|_2\leq {1\over 2} |x|_2$.

The completion of $\mathbb{Q}$ with  respect to $p$-adic norm defines the
$p$-adic field which is denoted by $\mathbb{Q}_p$ (see \cite{Ko}).

The algebraic completion of $\mathbb{Q}_p$ is denoted by $\C_p$ and it is
called {\it complex $p$-adic numbers}. Note that (see \cite{Ko})
$\C_p$ is algebraically closed, an infinite dimensional vector space over $\mathbb{Q}_p$.

 For any $a\in\C_p$ and
$r>0$ denote
$$
U_r(a)=\{x\in\C_p : |x-a|_p<r\},\ \ V_r(a)=\{x\in\C_p :
|x-a|_p\leq r\},
$$
$$
S_r(a)=\{x\in\C_p : |x-a|_p= r\}.
$$


\subsection{Dynamical systems in $\mathbb C_p$.}
To define a dynamical system we consider $X=\mathbb C_p$ and a function  $f: x\in U\to f(x)\in U$ (see \cite{PJS}).


A fixed point $x_0$ is called an {\it
	attractor} if there exists a neighborhood $U(x_0)$ of $x_0$ such
that for all points $x\in U(x_0)$ it holds
$\lim\limits_{n\to\infty}f^n(x)=x_0$. If $x_0$ is an attractor
then its {\it basin of attraction} is
$$
\mathcal A(x_0)=\{x\in \C_p :\ f^n(x)\to x_0, \ n\to\infty\}.
$$
A fixed point $x_0$ is called {\it repeller} if there  exists a
neighborhood $U(x_0)$ of $x_0$ such that $|f(x)-x_0|_p>|x-x_0|_p$
for $x\in U(x_0)$, $x\neq x_0$.

Let $x_0$ be a fixed point of a
function $f(x)$.
Put $\lambda=f'(x_0)$. The point $x_0$ is attractive if $0<|\lambda|_p < 1$, {\it indifferent} if $|\lambda|_p = 1$,
and repelling if $|\lambda|_p > 1$.

The ball $U_r(x_0)$ is said to
be a {\it Siegel disk} if each sphere $S_{\r}(x_0)$, $\r<r$ is an
invariant sphere of $f(x)$, i.e., $f^n(x)\in S_{\r}(x_0)$ for all $n=0,1,2\dots$.  The
union of all Siegel disks with the center at $x_0$ is called {\it
	a maximum Siegel disk} and is denoted by $SI(x_0)$.

\section{Dynamics on the set of real numbers}

In this section we consider a real dynamical system associated with the function $f:\mathbb{R}\rightarrow \mathbb{R}$ defined by
\begin{equation}\label{kutubxona} f(x)=\frac{x+a}{bx+c}, \ \ a, b, c\in \R, \ \ b\ne 0, \ \ ab-c\neq 0,
\end{equation}
where $x\neq \hat x=-\frac{c}{b}$.

\begin{lemma}\label{ii}
	The function $f(x)$ given by (\ref{kutubxona}) is invertible on $\mathbb R\setminus \{\hat x\}$.
\end{lemma}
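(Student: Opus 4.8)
The plan is to write down the inverse map explicitly and check it works. Since $f(x) = \frac{x+a}{bx+c}$, I would solve the equation $y = \frac{x+a}{bx+c}$ for $x$ in terms of $y$. Multiplying out gives $y(bx+c) = x+a$, hence $bxy - x = a - cy$, so $x(by - 1) = a - cy$, and therefore
\[
x = \frac{a - cy}{by - 1} = \frac{-cy + a}{by - 1}.
\]
This suggests defining $g(y) = \frac{-cy + a}{by - 1}$, which is again a Möbius-type transformation (now with the roles of the coefficients permuted). The first step is to observe that $g$ has the same shape as $f$ up to relabeling, so it is well defined on $\mathbb{R} \setminus \{1/b\}$, and then to verify directly that $g \circ f = \mathrm{id}$ and $f \circ g = \mathrm{id}$ on the appropriate domains by substitution.

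The key points to track carefully are the excluded points and the nonvanishing of denominators. When computing $g(f(x))$, the denominator of $g$ evaluated at $f(x)$ is $b f(x) - 1 = \frac{b(x+a)}{bx+c} - 1 = \frac{bx + ab - bx - c}{bx+c} = \frac{ab - c}{bx+c}$, which is nonzero precisely because $ab - c \neq 0$ (the standing hypothesis) and $x \neq \hat x = -c/b$. This is exactly where the condition $ab - c \neq 0$ earns its keep: it guarantees $f$ is not constant and that the inverse candidate never hits its own pole along the image of $f$. After clearing denominators one should get $g(f(x)) = x$ identically. The symmetric computation $f(g(y)) = y$ works the same way, with the denominator of $f$ at $g(y)$ again controlled by $ab - c \neq 0$.

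The only genuine subtlety — and the step I would be most careful about — is bookkeeping of domains: $f$ is defined on $\mathbb{R} \setminus \{\hat x\}$, and one should check that $f$ maps this set bijectively onto $\mathbb{R} \setminus \{1/b\}$ (the value $1/b$ is the "horizontal asymptote" $\lim_{x\to\infty} f(x)$ and is not attained), with $g$ mapping back the other way. Concretely: $f(x) = 1/b$ would force $b(x+a) = bx + c$, i.e. $ab = c$, impossible; and $f$ is injective on its domain because $f(x_1) = f(x_2)$ with the cross-multiplication above yields $(ab-c)(x_1 - x_2) = 0$. Hence $f$ restricted to $\mathbb{R}\setminus\{\hat x\}$ is a bijection onto its image $\mathbb{R}\setminus\{1/b\}$ with two-sided inverse $g$, which is the assertion of the lemma. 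I expect no real obstacle here beyond keeping the excluded points straight; it is essentially a one-line algebraic inversion dressed up with domain caveats.
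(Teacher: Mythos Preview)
Your proposal is correct and follows the same approach as the paper: write down the explicit inverse $f^{-1}(x)=\dfrac{a-cx}{bx-1}$ and use the hypothesis $ab-c\neq 0$ to see that $f(x)\neq \tfrac{1}{b}$ on $\mathbb{R}\setminus\{\hat x\}$. The paper's argument is just the terse version of exactly what you wrote; your added checks of injectivity and of both compositions are fine but not strictly needed beyond what the paper records.
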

\begin{proof} From $ab-c\neq 0$ it follows that $f(x)\ne {1\over b}$ for any $x\in \mathbb R\setminus \{\hat x\}$.
Therefore, for inverse of $f$ we have
$$f^{-1}(x)= {a-cx\over bx-1}, \ \ x\ne {1\over b}.$$
\end{proof}
Introduce the set of `bad' points for $f$:
\begin{equation}\label{kP}
	\mathcal{P}=\{x\in \mathbb{R}: \exists n\in \mathbb{N}\cup {0}, f^n(x)=\hat x\},
\end{equation}
Using Lemma \ref{ii} we get
\begin{equation}\label{Pi}
	\mathcal{P}=\{f^{-n}(\hat x) : n\in \mathbb{N}\cup {0}\}.
\end{equation}

 \begin{figure}[h!]
	\includegraphics[width=9cm]{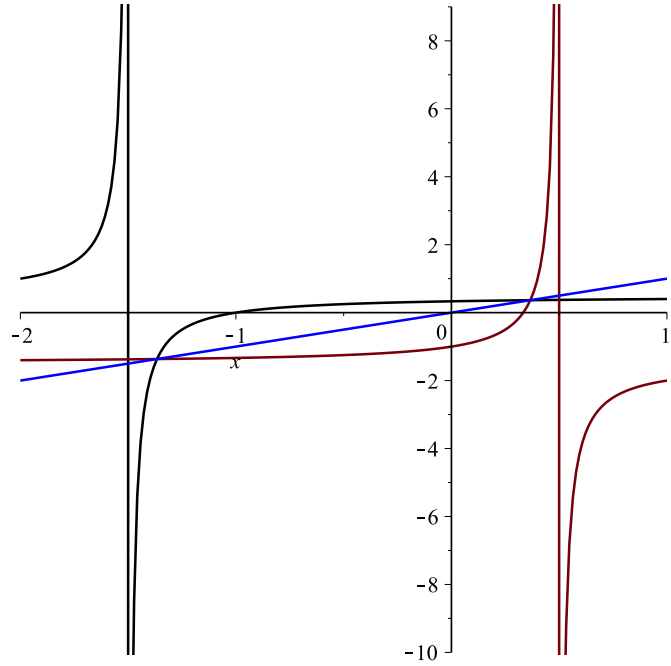}
	\caption{ The graphs of the function $f(x)$ (black),  $f^{-1}(x)$ (red) and $y=x$ (blue) for the case $a=1, b=2, c=3$. }\label{u1}
\end{figure}
%
%
%
 If $(c-1)^2+4ab\geq 0$ we define
\begin{equation}\label{ikkitanuqta} x_{1}=\frac{1-c+ \sqrt{(c-1)^2+4ab}}{2b}, \ \  x_{2}=\frac{1-c- \sqrt{(c-1)^2+4ab}}{2b} .
\end{equation}

Note that the set of fixed points of the function (\ref{kutubxona}) is
$${\rm Fix}(f)=\{x: f(x)=x\}=\left\{\begin{array}{lll}
\{x_1, x_2\}, \ \ \mbox{if} \ \ (c-1)^2+4ab>0\\[2mm]
\{x_1\}, \ \ \mbox{if} \ \ (c-1)^2+4ab=0\\[2mm]
\varnothing, \ \ \mbox{if} \ \ (c-1)^2+4ab<0.
\end{array}\right.$$

To study dynamical system (\ref{0a2}) for (\ref{kutubxona}) we use the following formula for $f^n(x)$, $n\geq 1$ (see\footnote{https://en.wikipedia.org/wiki/Iterated$_-$function}):
\begin{equation}\label{fn} f^n(x)=\frac{1}{b}+\frac{ab-c}{b}\cdot \left\{\begin{array}{ll}
		\frac{(bx-1+\alpha){\alpha}^{n-1}-(bx-1+\beta){\beta}^{n-1}}{(bx-1+\alpha){\alpha}^{n}-(bx-1+\beta){\beta}^{n}}, \ \ \mbox{if} \ \ \alpha\ne \beta\\[2mm]
	\frac{(bx-1)(n-1)+n\alpha}{\alpha[(bx-1)n+(n+1)\alpha]}, \ \ \mbox{if} \ \ \alpha=\beta,
	\end{array}\right.
\end{equation}
where
\begin{equation}\label{abe}\alpha\equiv \alpha(a,b,c)=\frac{1+c+\sqrt{(c-1)^2+4ab}}{2}, \ \ \beta\equiv \beta(a,b,c)=\frac{1+c-\sqrt{(c-1)^2+4ab}}{2}.
\end{equation}
\begin{rk}
	Note that formula (\ref{fn}) can be proven by induction. Moreover, it is true even for the case $(c-1)^2+4ab<0$.
\end{rk}
For $q\geq 1$ denote
\begin{equation}\label{kq}
K_q\equiv K_q(a,b,c)=\sum_{j=0}^{q-1}\alpha^{q-j-1}\beta^j.
\end{equation}

Let us give some examples of $K_q$:
$$K_q\equiv K_q(a,b,c)=\left\{\begin{array}{llll}
	1 \ \ \mbox{if} \ \ q=1\\[2mm]
	1+c \ \ \mbox{if} \ \ q=2\\[2mm]
	1+c+c^2+ab, \ \ \mbox{if} \ \ q=3\\[2mm]
	(1+c)(1+2ab+c^2)\ \ \mbox{if} \ \ q=4.\\[2mm]
\end{array}\right.
$$
\begin{thm}\label{th}
	Let function $f$ be given by parameters $(a,b,c)\in \mathbb R^3$ satisfying (\ref{kutubxona}).
	\begin{itemize}
		\item If  $K_q(a,b,c)\ne 0$ then ${\rm Per}_q(f)\setminus {\rm Fix}(f)=\emptyset.$
		\item If $K_q(a,b,c)= 0$ then any $x\in \mathbb R\setminus \mathcal P$ is $q$-periodic, i.e., ${\rm Per}_q(f)=\mathbb R\setminus \mathcal P$.
		\end{itemize}
\end{thm}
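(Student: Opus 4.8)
The heuristic behind the statement is that $f$ is the M\"obius transformation attached to the matrix $\left(\begin{smallmatrix}1&a\\ b&c\end{smallmatrix}\right)$, whose eigenvalues are exactly $\alpha$ and $\beta$, and $K_q=(\alpha^q-\beta^q)/(\alpha-\beta)$ vanishes precisely when the $q$-th power of that matrix is a scalar matrix, i.e.\ when $f^q=\mathrm{id}$; I would nevertheless give a self-contained proof by direct computation. The plan is to turn $f^q(x)=x$ into a polynomial equation in the single variable $u:=bx-1$ using the closed formula (\ref{fn}), and then read off its solutions. First I record three elementary facts: from (\ref{abe}), $\alpha+\beta=1+c$ and $\alpha\beta=c-ab$, so $\alpha,\beta\ne 0$ (as $ab-c\ne 0$), and if $\alpha=\beta$ then $K_q=q\alpha^{q-1}\ne 0$, so the case $K_q=0$ forces $\alpha\ne\beta$; directly from (\ref{kq}) (with $K_0:=0$) one has $\alpha^q-\beta^q=(\alpha-\beta)K_q$ and $K_q=\alpha^{q-1}+\beta K_{q-1}=\beta^{q-1}+\alpha K_{q-1}$; and a $q$-periodic point must lie in $\mathbb R\setminus\mathcal P$ (else its trajectory reaches $\hat x$ and terminates), so $\mathrm{Per}_q(f)\subseteq\mathbb R\setminus\mathcal P$ and for such $x$ formula (\ref{fn}) applies with nonzero denominator.

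In the case $\alpha\ne\beta$, I would substitute (\ref{fn}) into $f^q(x)=x$, factor $\alpha-\beta$ out of the numerator and denominator via the identities above, and clear denominators; the equation collapses to
$$K_q\,u^2+\bigl(K_{q+1}-(ab-c)K_{q-1}\bigr)\,u-(ab-c)\,K_q=0 .$$
A short substitution --- using $ab-c=-\alpha\beta$, $K_{q+1}=\alpha^q+\beta K_q$ and $K_q=\alpha^{q-1}+\beta K_{q-1}$ --- shows that $u=-\alpha$ and $u=-\beta$ are roots, and these are precisely $u=bx_1-1$ and $u=bx_2-1$ for the fixed points of (\ref{ikkitanuqta}). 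When $\alpha=\beta$, the same computation with the second branch of (\ref{fn}) reduces $f^q(x)=x$ to $(u+\alpha)^2=0$, whose only root corresponds to the single fixed point.

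Both bullets then follow. If $K_q\ne 0$, the displayed equation is a genuine quadratic with real coefficients and with roots $-\alpha,-\beta$ only; hence any $x\in\mathrm{Per}_q(f)$ has $bx-1\in\{-\alpha,-\beta\}$, so $x\in\{x_1,x_2\}=\mathrm{Fix}(f)$, and if $(c-1)^2+4ab<0$ these roots are non-real, so $\mathrm{Per}_q(f)=\varnothing=\mathrm{Fix}(f)$; the case $\alpha=\beta$ is handled identically by $(u+\alpha)^2=0$. In all cases $\mathrm{Per}_q(f)\subseteq\mathrm{Fix}(f)$, i.e.\ $\mathrm{Per}_q(f)\setminus\mathrm{Fix}(f)=\varnothing$. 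If instead $K_q=0$, then $\alpha\ne\beta$, $\alpha^q=\beta^q=:\gamma\ne 0$, and $K_{q+1}=\gamma$; substituting into (\ref{fn}) collapses the fraction there to $uK_{q-1}/\gamma$, so that $f^q(x)=\tfrac1b+\tfrac{(ab-c)K_{q-1}}{b\gamma}(bx-1)$. It remains only to check the identity $(ab-c)K_{q-1}=\gamma$, which reads $-\alpha\beta\cdot\frac{\alpha^{q-1}-\beta^{q-1}}{\alpha-\beta}=\frac{\alpha\beta^q-\alpha^q\beta}{\alpha-\beta}=\gamma$ once $\alpha^q=\beta^q=\gamma$; then $f^q(x)=x$ for all $x\in\mathbb R\setminus\mathcal P$, whence $\mathrm{Per}_q(f)=\mathbb R\setminus\mathcal P$.

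The argument is essentially bookkeeping; the points requiring care are the clearing of denominators in (\ref{fn}), the sign-careful verification that $-\alpha,-\beta$ (respectively the identity $(ab-c)K_{q-1}=\gamma$) really solve the reduced equations, and remembering to dispatch the parabolic case $\alpha=\beta$ and the fixed-point-free case $(c-1)^2+4ab<0$ separately. I do not expect a conceptual obstacle beyond that.
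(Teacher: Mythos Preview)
Your argument is correct. Both you and the paper reduce $f^q(x)=x$ to the same quadratic (after the substitution $u=bx-1$ your equation becomes $K_q(u+\alpha)(u+\beta)=0$, which is exactly $bK_q\,P_{2,1}(x)=0$ in the paper's notation), but the routes differ. The paper argues structurally: it writes $f^q(x)=(a_qx+b_q)/(c_qx+d_q)$, observes that every fixed point of $f$ is a root of $P_{2,q}(x)=c_qx^2+(d_q-a_q)x-b_q$, concludes $P_{2,q}=K_q\cdot P_{2,1}$ for some scalar $K_q$, and then \emph{derives} the closed form (\ref{kq}) by extracting the recurrence $K_{q+2}-(c+1)K_{q+1}+(c-ab)K_q=0$ from the iteration of the coefficients $a_q,b_q,c_q,d_q$. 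You instead take the closed formula (\ref{fn}) as given, feed it directly into $f^q(x)=x$, and verify by hand that the resulting quadratic has roots $-\alpha,-\beta$ and that it collapses to the identity $f^q=\mathrm{id}$ when $K_q=0$. Your approach is more computational but arguably cleaner on one point: the paper's divisibility step ``$P_{2,1}\mid P_{2,q}$, hence $P_{2,q}=K_qP_{2,1}$'' tacitly uses that a degree-two polynomial sharing both (possibly complex) roots with $P_{2,1}$ is a scalar multiple of it, whereas your direct factorisation sidesteps this. Conversely, the paper's approach has the merit of \emph{producing} the formula (\ref{kq}) for $K_q$ rather than merely using it.
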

\begin{proof} A $q$-periodic point of $f$ (where $q\geq 1$) is a
 solution to equation $f^q(x)=x$. Since $f^q(x)$ has the form (see (\ref{fn})):
 $$f^q(x)={a_qx+b_q\over c_qx+d_q}$$
 we have
  $$f^q(x)=x \ \ \Leftrightarrow \ \ P_{2,q}(x)=c_qx^2+(d_q-a_q)x-b_q=0.$$

 Note that any solution of $f(x)=x$ (i.e. fixed point) is a solution to $f^q(x)=x$ too. Therefore $P_{2,1}(x)=bx^2+(c-1)x-a$ divides $P_{2,q}(x)$, i.e., there exists $K_q(a,b,c)$ (independent on $x$) such that
 \begin{equation}\label{pk}
 	P_{2,q}(x)=K_q(a,b,c)P_{2,1}(x).
 \end{equation}	
  Consequently, a $q$-periodic point different from fixed point exists if and only if $K_q(a,b,c)=0$. Moreover, if   $K_q(a,b,c)=0$ then $P_{2,q}(x)=0$ for any $x\in \mathbb R\setminus \mathcal P$. Now it remains to show that $K_q$ has the form (\ref{kq}). From (\ref{pk}) we get
  \begin{equation}\label{ck}
  	c_q=bK_q,  \ \ d_q-a_q=(c-1)K_q, \ \ b_q=aK_q
  	\end{equation}
  Moreover for $f^{q+1}$ we have
  $$f^{q+1}(x)={a_{q+1}x+b_{q+1}\over c_{q+1}x+d_{q+1}}={(a_q+bb_q)x+(aa_q+cb_q)\over (c_q+bd_q)x+(ac_q+cd_q)}.$$
 Hence
  \begin{equation}\label{cd}
  	\begin{array}{llll}
  	a_{q+1}=a_q+bb_q\\[2mm]
  	 b_{q+1}=aa_q+cb_q\\[2mm]
 c_{q+1}=c_q+bd_q\\[2mm]
  d_{q+1}=ac_q+cd_q,
 \end{array}
 \end{equation}
with initial conditions $a_1=1$, $b_1=a$, $c_1=b$ and $d_1=c$.

  By (\ref{ck}) from (\ref{cd}) we obtain
   \begin{equation}\label{Kc}
  	K_{q+1}=K_q+d_q,  \ \ d_{q+1}=abK_q+cd_q.
  \end{equation}
  Find $d_q$ from the first equation of (\ref{Kc}) and substituting it to the second one we get
   \begin{equation}\label{kk}
  	K_{q+2}-(c+1)K_{q+1}+(c-ab)K_q=0.
  \end{equation}
  The equation (\ref{kk}) has general solution
  \begin{equation}\label{ks}K_q=\left\{\begin{array}{ll}
  	A\alpha^q+B\beta^q, \ \ \mbox{if} \ \ \alpha\ne \beta\\[2mm]
  	(A+qB)\alpha^q,  \ \ \mbox{if} \ \ \alpha=\beta,
  	\end{array}
  	\right.\end{equation}
  where $\alpha$ and $\beta$ are defined in (\ref{abe}).

  	Since $K_1=1$ and $K_2=1+c$  from (\ref{ks}) one can find corresponding $A$ and $B$ and see that $K_q$ is as in (\ref{kq}).
 \end{proof}
Denote
$$\mathbb K_q=\{(a,b, c)\in \mathbb R^3: K_{q}(a,b,c)=0\}.$$
$$\mathbb K=\mathbb R^3\setminus \bigcup_{q=2}^{+\infty} \mathbb K_q.$$
\begin{thm} Let function $f$ be given by parameters $(a,b,c)\in \mathbb K$ satisfying (\ref{kutubxona}).  Then following equalities hold:
\begin{itemize}
\item[1)] If $(c-1)^2+4ab>0$   then for any $x\in \mathbb{R}\setminus ({\rm Fix}(f)\cup \mathcal P)$  $$\lim_{n\rightarrow\infty}f^n(x)=
 \left\{\begin{array}{lll}
x_2, \ \ {\rm if} \ \left|\frac{\alpha}{\beta}\right|<1\\[2mm]
x_1, \ \ {\rm if} \ \left|\frac{\alpha}{\beta}\right|>1.\end{array} \right.$$

\item[2)] If $(c-1)^2+4ab=0$ then for any $x\in \mathbb{R}\setminus ({\rm Fix}(f)\cup \mathcal P)$  $$\lim_{n\rightarrow\infty}f^n(x)=x_1={1-c\over 2b}.$$

\item[3)] If $(c-1)^2+4ab<0$ then for any $x\in \mathbb{R}\setminus ({\rm Fix}(f)\cup \mathcal P)$ the trajectory $\{f^n(x)\}_{n\geq 1}$  is dense in $\mathbb R$.
\end{itemize}
\end{thm}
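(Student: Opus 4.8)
The plan is to substitute each of the three hypotheses directly into the closed formula (\ref{fn}) for $f^n(x)$ and then either pass to the limit or analyse the resulting sequence. Fix $x\in\mathbb R\setminus\mathcal P$, so that $f^n(x)$ is defined for all $n$ and the denominators in (\ref{fn}) never vanish, and abbreviate $u=bx-1$. I would first record the elementary facts $\alpha+\beta=1+c$ and $\alpha\beta=c-ab$ (the latter giving $\alpha,\beta\ne0$), together with the identifications $x_1=(1-\beta)/b$, $x_2=(1-\alpha)/b$ valid whenever $(c-1)^2+4ab\ge0$; equivalently $u+\beta=0\Leftrightarrow x=x_1$ and $u+\alpha=0\Leftrightarrow x=x_2$, so for $x\notin{\rm Fix}(f)$ both $u+\alpha$ and $u+\beta$ are nonzero.

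For part 1), $(c-1)^2+4ab>0$ makes $\alpha\ne\beta$ real, and since $(a,b,c)\in\mathbb K$ we have $K_2=1+c\ne0$, hence $\alpha\ne-\beta$, hence $|\alpha|\ne|\beta|$. Assuming $|\alpha/\beta|<1$ (the other case being identical after swapping $\alpha$ and $\beta$), dividing numerator and denominator of the first branch of (\ref{fn}) by $\beta^n$ kills the terms carrying $(\alpha/\beta)^n$ in the limit, so the quotient tends to $\frac{-(u+\beta)/\beta}{-(u+\beta)}=\frac1\beta$ (here $u+\beta\ne0$ is used); thus $\lim_n f^n(x)=\frac1b+\frac{ab-c}{b\beta}=\frac{1-\alpha}{b}=x_2$ via $ab-c=-\alpha\beta$. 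Part 2) is the same computation with the second branch of (\ref{fn}): now $\alpha=\beta=(1+c)/2\ne0$, and dividing numerator and denominator by $n$ gives a quotient tending to $\frac{u+\alpha}{\alpha(u+\alpha)}=\frac1\alpha$, whence $\lim_n f^n(x)=\frac1b+\frac{ab-c}{b\alpha}=\frac{1-\alpha}{b}=\frac{1-c}{2b}=x_1$ via $ab-c=-\alpha^2$.

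Part 3) is the substantive case. When $(c-1)^2+4ab<0$ we have $\beta=\overline\alpha$; I write $\alpha=re^{i\theta}$ with $r=\sqrt{c-ab}>0$ (note $c-ab=|\alpha|^2$) and $\theta\in(0,\pi)$, so that $r\cos\theta=(1+c)/2$ and $r\sin\theta=\delta:=\tfrac12\sqrt{-(c-1)^2-4ab}>0$. Since $u$ is real and $\alpha\notin\mathbb R$, $u+\alpha\ne0$; writing $u+\alpha=\rho e^{i\psi}$ and hence $u+\beta=\rho e^{-i\psi}$, the numerator and denominator of the first branch of (\ref{fn}) become $2i\rho r^{n-1}\sin(\psi+(n-1)\theta)$ and $2i\rho r^{n}\sin(\psi+n\theta)$. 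Using $ab-c=-r^2$ and $\sin(\psi+(n-1)\theta)=\sin(\psi+n\theta)\cos\theta-\cos(\psi+n\theta)\sin\theta$, this collapses to
\[
f^n(x)=\frac1b-\frac rb\cdot\frac{\sin(\psi+(n-1)\theta)}{\sin(\psi+n\theta)}=\frac{1-c}{2b}+\frac\delta b\,\cot(\psi+n\theta),
\]
with all denominators nonzero since $x\notin\mathcal P$.

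To finish, observe that $t\mapsto\frac{1-c}{2b}+\frac\delta b\cot t$ is $\pi$-periodic and restricts to a continuous bijection of $(0,\pi)$ onto $\mathbb R$, so density of $\{f^n(x)\}_{n\ge1}$ in $\mathbb R$ follows once $\{\psi+n\theta\bmod\pi\}_{n\ge1}$ is dense in $[0,\pi)$, i.e.\ once $\theta/\pi$ is irrational. This last point is the main obstacle, and I would settle it via (\ref{kq}): since $\alpha\ne\beta$,
\[
K_q=\sum_{j=0}^{q-1}\alpha^{q-1-j}\beta^j=\frac{\alpha^q-\beta^q}{\alpha-\beta}=r^{q-1}\,\frac{\sin q\theta}{\sin\theta},
\]
so $K_q=0$ iff $q\theta\in\pi\mathbb Z$. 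If $\theta/\pi=p/q$ in lowest terms, then $q\ge2$ (as $\theta\in(0,\pi)$ rules out $q=1$) and $K_q=0$, contradicting $(a,b,c)\in\mathbb K$; hence $\theta/\pi\notin\mathbb Q$, the orbit $\{n\theta\bmod\pi\}$ is dense by the classical Weyl/Kronecker theorem, and part 3) is done. The algebraic identities, the two limit passages of parts 1)--2), and the trigonometric simplification of part 3) I expect to be entirely routine.
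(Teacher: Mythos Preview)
Your argument follows the same overall strategy as the paper: plug the closed formula~(\ref{fn}) into each case, take limits in parts 1) and 2), and in part 3) pass to polar form and reduce to density of an irrational rotation. The limit computations in 1) and 2) match the paper's almost verbatim; your additional remark that $(a,b,c)\in\mathbb K$ forces $K_2=1+c\ne0$, hence $|\alpha|\ne|\beta|$, is a useful detail the paper leaves implicit.

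Where your proof genuinely improves on the paper is part 3). The paper rewrites $f^n(x)$ as $g(\tan(n\theta))$ for a certain M\"obius map $g$ and then simply asserts that the hypothesis $(a,b,c)\in\mathbb K$ makes ``$\theta$ irrational,'' invoking a billiard lemma. You instead obtain the cleaner form $f^n(x)=\tfrac{1-c}{2b}+\tfrac{\delta}{b}\cot(\psi+n\theta)$ and, crucially, you \emph{prove} the needed irrationality: using $K_q=(\alpha^q-\beta^q)/(\alpha-\beta)=r^{q-1}\sin(q\theta)/\sin\theta$ you show $K_q=0\Leftrightarrow q\theta\in\pi\mathbb Z$, so $(a,b,c)\in\mathbb K$ forces $\theta/\pi\notin\mathbb Q$. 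This both supplies the missing justification and corrects the condition (it is $\theta/\pi$, not $\theta$, that must be irrational for the rotation $n\theta\bmod\pi$ to be equidistributed). Your proof is correct and, in this respect, tighter than the paper's.
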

\begin{proof} 1) Let $|\frac{\alpha}{\beta}|<1$. We can write $f^n(x)$ as follows:
$$f^n(x)=\frac{1}{b}\left[1+(ab-c)\cdot \frac{(bx-1+\alpha)\left(\frac{\alpha}{\beta}\right)^{n-1}-(bx-1+\beta)}{(bx-1+\alpha)\left(\frac{\alpha}{\beta}\right)^{n-1}\alpha-(bx-1+\beta)\beta}\right].$$
Since $\lim_{n\rightarrow \infty}\left(\frac{\alpha}{\beta}\right)^{n-1}=0$, we get $\lim_{n\rightarrow \infty}f^n(x)=\frac{1}{b}\left(1+\frac{ab-c}{\beta}\right)=x_2$.
The case $\left|\frac{\alpha}{\beta}\right|>1$ is proved similarly.

2) Follows from the second line of formula (\ref{fn}).

3) If $(c-1)^2+4ab<0$ then $ab<0$ and
$$1-2\sqrt{-ab}<c<1+2\sqrt{-ab}.$$
Consequently,
$$(1-\sqrt{-ab})^2<c-ab<(1+\sqrt{-ab})^2.$$
Moreover, it is easy to see that $\alpha$ and $\beta$ are complex
 numbers with
$$r:=|\alpha|=|\beta|=\sqrt{c-ab}.$$
\begin{equation}\label{theta}
\theta=\arg(\alpha)=-\arg(\beta)=\arctan\left({\sqrt{-(c-1)^2-4ab}\over c+1}\right).	
\end{equation}
This leads to the polar form and by de Moivre's formula we have:
$$\alpha^k =r^k[\cos (k\theta) +i\sin (k\theta)], \ \ \beta^k =r^k[\cos (k\theta) +i\sin (k\theta)], \ \ k\in \mathbb N.$$
Using these formulas from (\ref{fn}) we get
$$f^n(x)={(bx-1)\sin((n-1)\theta)+r\sin(n\theta)\over r[(bx-1)\sin(n\theta)+r\sin((n+1)\theta)]}$$
$$={\{(bx-1)\cos(\theta)+r\}\sin(n\theta)-(bx-1)\sin(\theta)\cos(n\theta)\over r[\{bx-1+r\cos(\theta)\}\sin(n\theta)+r\sin(\theta)\cos(n\theta)]}.$$
Denote
$$g(t)\equiv g(t, r, bx, \theta)={\{(bx-1)\cos(\theta)+r\}t-(bx-1)\sin(\theta)\over r[\{bx-1+r\cos(\theta)\}t+r\sin(\theta)]}$$
and $t_n=\tan(n\theta)$. Then $f^n(x)=g(t_n)$.
Thus for any $x\in \mathbb{R}\setminus ({\rm Fix}(f)\cup \mathcal P)$ we conclude that the set of limit points of sequence  $\{f^n(x)\}_{n\geq 1}$ can be given by the set of limit points of the sequence $\{t_n=\tan(n\theta)\}_{n\geq 1}$.
The following lemma gives the set of limit points of $\{t_n\}_{n\geq 1}$.

\begin{lemma}\label{bt} The following hold
	
	\begin{itemize}
		\item[1.] If $\theta\in \mathbb R$ is a rational number,
		then $\{t_n\}_{n\geq 1}$  is periodic.
		Moreover, if $\theta=s/q$ then the length of periodicity is equal to $q$.
		
		\item[2.]If $\theta\in \mathbb R$ is an irrational number,
		then $\{t_n\}_{n\geq 1}$  is dense in $\mathbb R$.
		In other words, for any $a\in \mathbb R$ there is a sequence $\{n_k\}_{k=1}^\infty$ of natural numbers such that
		$$\lim_{k\to\infty}t_{n_k}=a.$$
	\end{itemize}
\end{lemma}
{\bf Proof of lemma}. We shall use known (see \cite[Section 2.2]{Rmb})
theorem of mathematical billiards on circular table. On such a table a billiard trajectory is completely determined by the angle $\theta$ made with the circle.
Define $T_\theta: [0,1]\to [0,1]$ such
that $T_\theta(x) = x + \theta (mod \, 1)$, where $x\in [0,1]$
and $\theta\in \mathbb R$.
Here $\theta$
denotes the angle of rotation along the circle. The $n$ iterations of the map $T_\theta$ given as follows:
\begin{equation}\label{Tn}
	T^n_\theta(x) = x + n\theta (mod \, 1).
\end{equation}
By \cite[Theorem 2.1]{Rmb} the following are known
	\begin{itemize}
		\item[(i)] If $\theta\in \mathbb R$ is a rational number,
		then every orbit of the mapping $T_\theta(x)$ is periodic.
		Moreover, if $\theta=p/q$ then the length of periodicity is equal to $q$.
		
		\item[(ii)] If $\theta\in \mathbb R$ is an irrational number,	then for each $x\in [0,1]$ the sequence $\{T^n_\theta(x)\}_{n\geq 0}$ is dense in $[0,1]$.
	\end{itemize}
In order to prove lemma, consider a
billiard on a circle with radius 1, which corresponds to the
rotation number $\theta$ radian. Then sequence
$$t_0=0, t_1=\tan(\theta), t_2=\tan(2\theta), t_3=\tan(3\theta),...$$ on $\mathbb R$ corresponds to the trajectory
$0, \theta, 2\theta, 3\theta, ...$ of the billiard with the starting point 0. Therefore, lemma follows from properties (i) and (ii) mentioned above. Lemma is proved.

Now we continue proof of part 3) of theorem.
By assumption $(a,b,c)\in \mathbb K$  of theorem the quantity $\theta$ defined in (\ref{theta}) is an irrational number. Since $g(t)$ maps $\mathbb R$ on itself the part 3) follows from part 2 of Lemma \ref{bt}.
\end{proof}
\section{$f(x)$ on the set of complex $p$-adic numbers}

In this section we consider dynamical system associated with the function $f:\C_p\rightarrow \C_p$ defined as
\begin{equation}\label{fp}
	f(x)={x+a\over bx+c}, \ \ b\ne 0, \ \ c\ne ab, \ \ a,b,c\in \mathbb C_p,
\end{equation}
where $x\ne \hat x=-c/b$.

\subsection{Known results on $p$-adic dynamical system}
Following \cite{Mukhamedov3} we give known results about dynamical systems generated by (\ref{fp}). 

Since $\mathbb C_p$ is an algebraic closed field, the points \eqref{ikkitanuqta} are fixed for $p$-adic version of $f$ too.  

\subsubsection{Case: $|f'(x_1)|_p=1$.}

\begin{thm}\label{m3}\cite{Mukhamedov3} If
\begin{equation}\label{c1}
\left|\frac{b}{\sqrt{c-ab}}\right|_p<1 \ \ \mbox{and} \ \ 	|f'(x_i)|_p=\left|\frac{c-ab}{(bx_i+c)^2}\right|_p=1, \ \ i=1, 2.
\end{equation}
Then
$$SI(x_i)=U_{1+\varepsilon_c}(x_i), \ \ i=1, 2,$$
where $\varepsilon_c=\left|\frac{\sqrt{c-ab}}{b}\right|_p-1$.
\end{thm}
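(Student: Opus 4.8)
The plan is to establish the Siegel disk description by tracking the $p$-adic norm $|f^n(x) - x_i|_p$ for points $x$ near a fixed point $x_i$ and showing it is preserved under iteration, while also verifying that no larger ball works. First I would fix $i$ and set $y = x - x_i$, rewriting the M\"obius map in terms of the deviation from the fixed point: since $f(x_i) = x_i$ one can write
\begin{equation*}
f(x) - x_i = \frac{(x - x_i)(1 - b x_i)}{bx + c} = \frac{\lambda_i (x - x_i)}{1 + \frac{b}{bx_i + c}(x - x_i)},
\end{equation*}
where $\lambda_i = f'(x_i) = \frac{c - ab}{(bx_i + c)^2}$ and $1 - b x_i = \frac{c-ab}{bx_i+c}$ (here I use $bx_i+c \ne 0$, which holds because $c - ab \ne 0$). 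This "deviation form" is the workhorse: it shows $f(x) - x_i$ is $\lambda_i(x-x_i)$ multiplied by a correction factor whose distance from $1$ is controlled by $\left|\frac{b}{bx_i+c}\right|_p |x - x_i|_p$.

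Next I would estimate that correction factor. Under hypothesis \eqref{c1}, $|\lambda_i|_p = 1$, so $|bx_i + c|_p^2 = |c - ab|_p$, hence $\left|\frac{b}{bx_i+c}\right|_p = \frac{|b|_p}{|c-ab|_p^{1/2}} = \left|\frac{b}{\sqrt{c-ab}}\right|_p < 1$. Therefore, for any $x$ with $|x - x_i|_p \le 1 + \varepsilon_c = \left|\frac{\sqrt{c-ab}}{b}\right|_p$, we get $\left|\frac{b}{bx_i+c}(x-x_i)\right|_p \le 1$; in fact on each sphere $S_\rho(x_i)$ with $\rho < 1+\varepsilon_c$ this quantity is strictly less than $1$, so by the strong triangle inequality the denominator $1 + \frac{b}{bx_i+c}(x-x_i)$ has norm exactly $1$. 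Consequently $|f(x) - x_i|_p = |\lambda_i|_p |x - x_i|_p = |x - x_i|_p$, i.e. $f$ maps $S_\rho(x_i)$ to itself for every $\rho < 1 + \varepsilon_c$. Iterating, $f^n(x) \in S_\rho(x_i)$ for all $n$, which is exactly the Siegel disk condition; hence $U_{1+\varepsilon_c}(x_i) \subseteq SI(x_i)$.

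Then I would show the disk is maximal, i.e. $SI(x_i) \subseteq U_{1+\varepsilon_c}(x_i)$, by checking that the sphere $S_{1+\varepsilon_c}(x_i)$ is not invariant. On this sphere $\left|\frac{b}{bx_i+c}(x-x_i)\right|_p = 1$, so the denominator $1 + \frac{b}{bx_i+c}(x-x_i)$ can have norm strictly less than $1$ (when the two terms of norm $1$ cancel to leading order — such $x$ exist since $\C_p$ is algebraically closed and residue classes can be matched), forcing $|f(x) - x_i|_p > 1 + \varepsilon_c$ and ejecting the point from the ball; alternatively one exhibits a specific point on the sphere whose image has strictly larger norm. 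This kills invariance of every sphere of radius $\ge 1 + \varepsilon_c$, so no Siegel disk can extend past $U_{1+\varepsilon_c}(x_i)$, giving equality.

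The main obstacle I anticipate is the maximality half: one must produce, on the critical sphere $S_{1+\varepsilon_c}(x_i)$, an actual point $x$ whose image leaves the ball, which requires a short argument that the leading terms of $1$ and $\frac{b}{bx_i+c}(x-x_i)$ can be arranged to cancel — this uses the algebraic closedness of $\C_p$ and a careful look at which residues are attainable, rather than just a norm inequality. The invariance half, by contrast, is a routine application of properties 3.1--3.2 of the $p$-adic norm once the deviation form is in hand. I would also double-check the edge bookkeeping: that $\hat x = -c/b$ does not lie in $U_{1+\varepsilon_c}(x_i)$ (so $f$ and all its iterates are genuinely defined there), which follows because $|\hat x - x_i|_p = \left|\frac{bx_i + c}{b}\right|_p = \frac{|c-ab|_p^{1/2}}{|b|_p} = 1 + \varepsilon_c$, placing $\hat x$ exactly on the boundary sphere and outside the open ball.
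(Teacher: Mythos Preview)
Your argument is correct and is essentially the paper's own ``new approach'' (Section~4.2): the deviation identity you write is exactly formula~(\ref{simple}) (and (\ref{ayirma}) in the single--fixed--point case) after taking norms, and the sphere invariance for $\rho<1+\varepsilon_c$ follows from the strong triangle inequality just as in Lemmas~\ref{1}--\ref{filemma}. The only difference is packaging: the paper encodes the radius map as an auxiliary real function $\varphi_{\alpha_i,\beta_i}$ (resp.\ $\psi_\alpha$) and reads off the Siegel disk from its fixed-point set, whereas you argue the norm estimate directly; for maximality you can in fact avoid the residue-matching step altogether, since for every $r>1+\varepsilon_c$ your own formula gives $|f(x)-x_i|_p=\left|\tfrac{bx_i+c}{b}\right|_p=1+\varepsilon_c$ for all $x\in S_r(x_i)$, so no sphere of radius exceeding $1+\varepsilon_c$ is invariant.
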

The following theorem gives relations between Siegel disks $SI(x_i)$, $i=1,2$.
\begin{thm}\cite{Mukhamedov3} If condition (\ref{c1})
is satisfied. Then
\begin{itemize}
\item[1.] If $\left|\frac{\sqrt{(c-1)^2+4ab}}{b}\right|_p\geq 1+\varepsilon_c$, then $SI(x_1)\cap SI(x_2)=\varnothing;$
\item[2.] otherwise $SI(x_1)=SI(x_2)$.
\end{itemize}
\end{thm}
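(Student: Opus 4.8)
The plan is to reduce the whole statement to Theorem \ref{m3} together with the elementary geometry of balls in a non-archimedean field. By Theorem \ref{m3}, under hypothesis (\ref{c1}) we have $SI(x_1)=U_{r}(x_1)$ and $SI(x_2)=U_{r}(x_2)$ with the \emph{same} radius $r=1+\varepsilon_c=\left|\frac{\sqrt{c-ab}}{b}\right|_p$. Since the absolute value on $\C_p$ is ultrametric, any two balls of the same radius are either disjoint or equal; more precisely, for the open balls $U_r(\cdot)$ used in the Preliminaries one has $U_r(x_1)=U_r(x_2)$ when $|x_1-x_2|_p<r$, and $U_r(x_1)\cap U_r(x_2)=\varnothing$ when $|x_1-x_2|_p\geq r$. (If $z$ lay in both balls then $|x_1-x_2|_p\le\max\{|x_1-z|_p,|z-x_2|_p\}<r$, forcing the first alternative; conversely $|x_1-x_2|_p<r$ makes $x_2$ a center of $U_r(x_1)$.) Thus everything comes down to computing $|x_1-x_2|_p$ and comparing it with $r$.

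From the explicit formulas (\ref{ikkitanuqta}) one gets $x_1-x_2=\dfrac{\sqrt{(c-1)^2+4ab}}{b}$, so $|x_1-x_2|_p=\left|\dfrac{\sqrt{(c-1)^2+4ab}}{b}\right|_p$; the choice of square root in the algebraically closed field $\C_p$ is immaterial, since the two values differ only by a sign. Substituting into the dichotomy of the previous paragraph: if $\left|\dfrac{\sqrt{(c-1)^2+4ab}}{b}\right|_p\ge 1+\varepsilon_c=r$ then $x_2\notin U_r(x_1)$ and hence $SI(x_1)\cap SI(x_2)=\varnothing$, which is item 1; if instead the strict reverse inequality holds, then $x_2\in U_r(x_1)$, so $U_r(x_1)=U_r(x_2)$, i.e. $SI(x_1)=SI(x_2)$, which is item 2. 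One may also record the pleasant reformulation $r=\left|\frac{\sqrt{c-ab}}{b}\right|_p$, so the threshold condition is simply $\bigl|\sqrt{(c-1)^2+4ab}\bigr|_p\ \gtrless\ \bigl|\sqrt{c-ab}\bigr|_p$.

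There is essentially no serious obstacle here: the mathematical content is carried entirely by Theorem \ref{m3}, and the remaining work is bookkeeping. The three points deserving a line of care in the write-up are (i) using the strict/non-strict inequalities consistent with $U_r(\cdot)$ being \emph{open} balls — this is precisely why the boundary case $|x_1-x_2|_p=r$ belongs to item 1 and the statement there reads "$\ge$"; (ii) noting that hypothesis (\ref{c1}) forces $\bigl|\sqrt{c-ab}/b\bigr|_p>1$, hence $\varepsilon_c>0$ and $r=1+\varepsilon_c>1$ is a genuine admissible radius; and (iii) the harmless square-root ambiguity mentioned above. Assembling these gives the two cases of the theorem.
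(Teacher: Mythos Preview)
Your argument is correct. The paper does not supply its own proof of this statement: it is listed in the ``Known results'' subsection and attributed to \cite{Mukhamedov3} without argument. The route you take---invoke Theorem~\ref{m3} to identify $SI(x_i)=U_r(x_i)$ with the common radius $r=1+\varepsilon_c$, compute $|x_1-x_2|_p=\left|\sqrt{(c-1)^2+4ab}/b\right|_p$ from (\ref{ikkitanuqta}), and then apply the ultrametric dichotomy that two open balls of equal radius are either identical or disjoint according as the distance between their centers is $<r$ or $\geq r$---is exactly the natural proof and is how the original reference argues as well. Your care with the open-ball convention (so that the boundary case $|x_1-x_2|_p=r$ lands in item~1) is appropriate and matches the ``$\geq$'' in the statement.
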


\subsubsection{Case: $|f'(x_1)|_p\neq 1$}
Denote
$$\delta_2=\left|{bx_1+c\over b}\right|-1.$$

\begin{thm}\label{cupteorama}\cite{Mukhamedov3} If 
\begin{equation}\label{c2} \left|\frac{c-ab}{(bx_1+c)^2}\right|_p<1 \ \ \mbox{and} \ \ 
\left|\frac{b}{bx_1+c}\right|_p<1.
\end{equation} Then $$\bigcup_{\delta:0\leq \delta\neq 1+\delta_2}S_\delta(x_1)\subset \mathcal A(x_1).$$
\end{thm}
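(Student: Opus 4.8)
The plan is to conjugate the problem to a recursion centered at the attracting fixed point $x_1$ and then run two short ultrametric estimates. Setting $y=x-x_1$ and using $f(x_1)=x_1$, a direct substitution $x=x_1+y$ into (\ref{fp}) together with the fixed-point relation $x_1(bx_1+c)=x_1+a$ gives, for $y_n:=f^n(x)-x_1$, the recursion
\begin{equation*}
y_{n+1}=\frac{\lambda\, y_n}{1+\mu\, y_n},\qquad \lambda:=f'(x_1)=\frac{c-ab}{(bx_1+c)^2},\quad \mu:=\frac{b}{bx_1+c}.
\end{equation*}
I would first record the bookkeeping identities $bx_1+c=\alpha$ and $\alpha\beta=c-ab$ (so that $\lambda=\beta/\alpha$), together with $x_1-x_2=(\alpha-\beta)/b$ and $\hat x-x_1=-\alpha/b$, where $\alpha,\beta$ are as in (\ref{abe}) and $x_1,x_2$ as in (\ref{ikkitanuqta}). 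Hypothesis (\ref{c2}) says $|\lambda|_p<1$ and $|\mu|_p<1$; since $\lambda=\beta/\alpha$, the first gives $|\beta|_p<|\alpha|_p$, hence $|\alpha-\beta|_p=|\alpha|_p$. Combined with the definition of $\delta_2$, this shows $|x_1-x_2|_p=|\hat x-x_1|_p=|\mu|_p^{-1}=1+\delta_2$; that is, both the second fixed point $x_2$ and the pole $\hat x$ lie on the single sphere $S_{1+\delta_2}(x_1)$ excluded in the statement, and $\bigcup_{\delta\ge 0,\ \delta\ne 1+\delta_2}S_\delta(x_1)=\C_p\setminus S_{1+\delta_2}(x_1)$.

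The core of the argument is the case $x\in U_{1+\delta_2}(x_1)$, i.e. $|y_0|_p<|\mu|_p^{-1}$. Then $|\mu y_0|_p<1=|1|_p$, so the strong triangle inequality gives $|1+\mu y_0|_p=1$ and therefore $|y_1|_p=|\lambda|_p|y_0|_p<|y_0|_p<|\mu|_p^{-1}$. Inductively the orbit stays inside $U_{1+\delta_2}(x_1)$, never hits $-1/\mu$ (hence never hits $\hat x$, so the whole trajectory is well defined), and $|y_n|_p=|\lambda|_p^{\,n}|y_0|_p\to 0$; thus $f^n(x)\to x_1$. This already yields $S_\delta(x_1)\subset\mathcal A(x_1)$ for every $0\le\delta<1+\delta_2$. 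For the remaining case $|y_0|_p>1+\delta_2=|\mu|_p^{-1}$, one has $|\mu y_0|_p>1$, so $|\mu y_0|_p\neq|1|_p$ and the strong triangle inequality gives $|1+\mu y_0|_p=|\mu y_0|_p$, whence $|y_1|_p=|\lambda|_p/|\mu|_p$. The single numerical check needed is $|\lambda|_p/|\mu|_p<|\mu|_p^{-1}$, which is exactly $|\lambda|_p<1$; hence $f(x)\in U_{1+\delta_2}(x_1)$ and the previous paragraph applies. Combining the two cases proves $\bigcup_{\delta:0\le\delta\ne 1+\delta_2}S_\delta(x_1)\subset\mathcal A(x_1)$.

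I do not expect a genuine analytic obstacle: once the recursion $y_{n+1}=\lambda y_n/(1+\mu y_n)$ is in hand, each case is one application of the ultrametric inequality. The only points that need care are (a) the algebra identifying $bx_1+c$ with $\alpha$ and $c-ab$ with $\alpha\beta$, which is what upgrades the derivative condition $|f'(x_1)|_p<1$ to the \emph{strict} inequality $|\beta|_p<|\alpha|_p$ and thereby pins down exactly where the excluded sphere sits; and (b) verifying the trajectory is always defined and never absorbed by $x_2$ — both automatic, since $\hat x$ and $x_2$ lie on $S_{1+\delta_2}(x_1)$ and, $f$ being a bijection of $\PP^1(\C_p)$, any orbit reaching $\hat x$ or $x_2$ would have to start there.
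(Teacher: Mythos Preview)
Your argument is correct. The paper itself does not prove Theorem~\ref{cupteorama} directly---it is quoted from \cite{Mukhamedov3}---but the ``new approach'' of Section~4.2 reproves (and sharpens) it via formula~(\ref{simple}) and the auxiliary real map $\varphi_{\alpha,\beta}$ of Lemmas~\ref{filemma}--\ref{sistemalemma}, and your proof is essentially that same argument unpacked: your recursion $y_{n+1}=\lambda y_n/(1+\mu y_n)$ is exactly what (\ref{simple}) records at the level of norms (with $\beta_1=|\mu|_p^{-1}=1+\delta_2$ and $\alpha_1/\beta_1=|\lambda|_p$), and your two ultrametric cases $|y_0|_p\lessgtr|\mu|_p^{-1}$ are precisely the dichotomy the paper encodes in $\varphi_{\alpha,\beta}$. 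The only difference is cosmetic---you argue directly on the $p$-adic recursion, while the paper first passes to the real dynamical system of radii---and neither buys anything the other lacks for this statement.
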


\begin{thm}\label{spminus}\cite{Mukhamedov3} Let the condition (\ref{c2}) be satisfied. Then $$\mathcal A(x_1)=\C_p\setminus\{\hat x, x_2\}.$$
\end{thm}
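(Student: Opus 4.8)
The plan is to upgrade the inclusion $\bigcup_{\delta:0\leq\delta\neq 1+\delta_2}S_\delta(x_1)\subset\mathcal A(x_1)$ of Theorem \ref{cupteorama} to the full statement $\mathcal A(x_1)=\C_p\setminus\{\hat x,x_2\}$. Note first that $\C_p$ is the disjoint union of $\{x_1\}$ together with all spheres $S_\delta(x_1)$, $\delta>0$; Theorem \ref{cupteorama} already covers every such sphere except the single critical one $S_{1+\delta_2}(x_1)$, and of course $x_1\in\mathcal A(x_1)$ trivially. On the other hand the two excluded points plainly cannot lie in $\mathcal A(x_1)$: the point $\hat x$ has no well-defined trajectory (already $f(\hat x)$ is undefined), and $x_2$ is a fixed point distinct from $x_1$, so $f^n(x_2)=x_2\not\to x_1$. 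Hence the only thing left to prove is that every point $x$ with $|x-x_1|_p=1+\delta_2$, other than $\hat x$ and $x_2$, satisfies $f^n(x)\to x_1$.

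First I would locate $\hat x$ and $x_2$ relative to the sphere $S_{1+\delta_2}(x_1)$. From the definition $\delta_2=|(bx_1+c)/b|_p-1$ we have $|x_1-\hat x|_p=|x_1+c/b|_p=|bx_1+c|_p/|b|_p=1+\delta_2$, so $\hat x$ lies exactly on the critical sphere; similarly one computes $|x_1-x_2|_p$ from \eqref{ikkitanuqta} (it equals $|\sqrt{(c-1)^2+4ab}/b|_p$) and checks, using condition \eqref{c2}, that $x_2$ also lies on this sphere. So the critical sphere genuinely contains the two bad points, and we must show all its other points are attracted to $x_1$. The main tool is the local expansion of $f$ at $x_1$: writing $f(x)-x_1=\lambda(x-x_1)+(\text{higher order})$ with $\lambda=f'(x_1)=(c-ab)/(bx_1+c)^2$, and more precisely using the exact Möbius identity
\[
f(x)-x_1=\frac{(x-x_1)(c-ab)}{(bx_1+c)(bx+c)},
\]
one gets the clean multiplicative formula
\[
|f(x)-x_1|_p=\frac{|c-ab|_p}{|bx_1+c|_p\,|bx+c|_p}\,|x-x_1|_p.
\]
Thus control of the trajectory reduces to control of $|bx+c|_p=|b|_p\,|x-\hat x|_p$ along the orbit.

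The crux is therefore: starting from $x\in S_{1+\delta_2}(x_1)\setminus\{\hat x,x_2\}$, show that after finitely many steps the orbit lands on a non-critical sphere, after which Theorem \ref{cupteorama} finishes the job. Here I would argue that $f(x)$ cannot again have $|f(x)-x_1|_p=1+\delta_2$ unless $f(x)$ has the same ``critical'' distance to $\hat x$; using the displayed formula together with $|x-x_1|_p=1+\delta_2$, the condition $|f(x)-x_1|_p=1+\delta_2$ forces $|bx+c|_p$ (equivalently $|x-\hat x|_p$) to take one specific value, i.e. $x$ must lie on a sphere centered at $\hat x$ of a definite radius inside the critical sphere about $x_1$. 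One then checks — this is where conditions \eqref{c2} are used quantitatively — that the only points of $S_{1+\delta_2}(x_1)$ meeting that constraint are $x_2$ (fixed, already excluded) and points whose image is pushed off the critical sphere, or else $f$ maps the relevant sub-sphere strictly closer to $x_1$. Iterating at most once or twice, every admissible $x$ leaves the critical sphere; the only obstruction to leaving would be a periodic orbit trapped on it, and the Möbius structure (a Möbius map has at most two fixed points, here $x_1,x_2$, and no genuine $2$-cycles when $K_2=1+c\neq0$, cf. Theorem \ref{th}) rules that out.

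The step I expect to be the main obstacle is precisely the fine analysis on the critical sphere $S_{1+\delta_2}(x_1)$: away from it the geometric contraction is transparent from the displayed norm identity, but on it the first-order term $|\lambda(x-x_1)|_p$ exactly matches the radius, so one must extract the decay from the variation of $|bx+c|_p$ across the sphere and show it cannot conspire to keep the orbit pinned there except at the two excluded points. I would handle this by a careful case split on $|x-\hat x|_p$ for $x$ on the critical sphere (using 3.1–3.2 of the strong triangle inequality to compare $|bx+c|_p$ with $|bx_1+c|_p$), reducing everything to the two hypotheses in \eqref{c2}, and then invoke Theorem \ref{cupteorama} as a black box once the orbit has escaped.
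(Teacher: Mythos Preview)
Your displayed identity
\[
|f(x)-x_1|_p=\frac{|c-ab|_p}{|bx_1+c|_p\,|bx+c|_p}\,|x-x_1|_p
\]
is exactly the paper's formula \eqref{simple} (use $(1-bx_1)(bx_1+c)=c-ab$), so the computational core is shared. The paper, however, does \emph{not} argue via Theorem \ref{cupteorama}. Its own proof of this result is the ``new approach'' of Section 4.2: one packages \eqref{simple} into the auxiliary real map $\varphi_{\alpha_1,\beta_1}$ on radii $r=|x-x_1|_p$ (Lemma \ref{filemma}), shows $\varphi_{\alpha_1,\beta_1}^n(r)\to 0$ for all $r$ when $\alpha_1<\beta_1$ (Lemma \ref{sistemalemma}), and reads off Theorem \ref{thm2}, which the paper explicitly says is a reformulation of Theorem \ref{spminus}. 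The critical sphere is not singled out for ad hoc treatment; it is absorbed into the same real dynamical system, and the companion map $\varphi_{\alpha_2,\beta_2}$ around $x_2$ (with $\alpha_2=\beta_1>\beta_2=\alpha_1$, i.e.\ $x_2$ repelling) handles the residual case.

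Your handling of the critical sphere contains a genuine gap. The claim that ``iterating at most once or twice, every admissible $x$ leaves the critical sphere'' is false: points $x$ with $0<|x-x_2|_p$ arbitrarily small lie on $S_{1+\delta_2}(x_1)$ (since $|x_2-x_1|_p=1+\delta_2$) and stay there for arbitrarily many iterations, because $x_2$ is a fixed point on that sphere. Ruling out periodic orbits via $K_2\neq 0$ is also insufficient; a non-periodic orbit could a priori remain trapped. What is actually needed is the quantitative repelling estimate at $x_2$: from the twin identity $|f(x)-x_2|_p=\alpha_2\,|x-x_2|_p/|x-\hat x|_p$ one sees that while $|x-x_2|_p<\beta_2=\alpha_1$ (equivalently $|x-\hat x|_p=\alpha_1$, exactly the condition to stay on the critical sphere), the distance to $x_2$ is multiplied by $\beta_1/\alpha_1>1$ at every step. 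Hence the orbit must eventually exit $U_{\alpha_1}(x_2)$, hence leave the locus $|x-\hat x|_p=\alpha_1$, hence leave the critical sphere; then Theorem \ref{cupteorama} (or the $r<\beta_1$ contraction) finishes. This is precisely what the paper's Lemma \ref{sistemalemma} in the case $\alpha>\beta$ encodes. If you insert this repelling argument in place of the ``once or twice''/periodicity reasoning, your outline becomes correct.
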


\subsection{New approach for $p$-adic case}

Let us consider the function (\ref{fp}) and use methods developed in \cite{ARS}, \cite{RS} -\cite{S} to study dynamical systems of (\ref{fp}).

\subsubsection{The unique fixed point}
If $D=(c-1)^2+4ab=0$, then (\ref{fp}) function has unique fixed point: $x_0=\frac{1-c}{2b}$. Now we find $|f'(x_0)|_p$:
\begin{eqnarray*}
	|f'(x_0)|_p=\left|\frac{c-ab}{(bx_0+c)^2}\right|_p=\left|\frac{c-ab}{(b\cdot \frac{1-c}{2b}+c)^2}\right|_p=\left|\frac{4(c-ab)}{(1+c)^2}\right|_p=\\=\left|\frac{4c-4ab}{(1+c)^2}\right|_p=\left|\frac{4c+(c-1)^2}{(1+c)^2}\right|_p=\left|\frac{(1+c)^2}{(1+c)^2}\right|_p=1.
\end{eqnarray*}

So, $x_0$ is an indifferent fixed point. 

Using equality $(c-1)^2+4ab=0$ and $x_0=\frac{1-c}{2b}$ 
we get the following:
\begin{equation}\label{ayirma}
|f(x)-x_0|_p=|x-x_0|_p\cdot \left|\frac{\frac{c+1}{2b}}{(x-x_0)+\frac{c+1}{2b}}\right|_p.
\end{equation}

Denote the following
\begin{equation}\label{kattaP}
\mathcal{P}_p=\{x\in \mathbb{C}_p: \exists n\in \mathbb{N}\cup {0}, f^n(x)=\hat x\},
\end{equation}
$$\alpha=\left|\frac{c+1}{2b}\right|_p.$$

For $\alpha\geq 0$ define the function $\psi_\alpha:[0,+\infty)\rightarrow [0,+\infty)$ by
$$\psi_\alpha(r)=\left\{\begin{array}{lll}
\ r, \ \ \ \ {\rm if }\ \  r < \alpha\\[2mm]
\alpha^{*}, \ \ \ {\rm if} \ \ r=\alpha\\[2mm]
\alpha, \  \  \ \ {\rm if} \ \ r> \alpha
\end{array} \right.,$$

where $\alpha^{*}$ is a positive number with $\alpha^{*}\geq \alpha$.

\begin{lemma}\label{1} If $x\in S_r(x_0)\setminus\mathcal{P}_p$, then for the function (\ref{fp}) the following formula holds
$$|f^n(x)-x_0|_p=\psi_\alpha^{n}(r).$$
\end{lemma}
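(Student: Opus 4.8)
The plan is to reduce everything to the one-step identity $|f(x)-x_0|_p=\psi_\alpha(|x-x_0|_p)$, read off from (\ref{ayirma}) via the strong triangle inequality, and then to iterate it.

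A preliminary remark first. The standing assumption $c\ne ab$ forces $c\ne -1$, since when $D=(c-1)^2+4ab=0$ one has $c-ab=\tfrac{(c+1)^2}{4}$; hence $\alpha=\big|\tfrac{c+1}{2b}\big|_p>0$. Also, if $x\notin\mathcal{P}_p$ then $f^n(x)$ is defined for every $n\ge 0$, $f^n(x)\ne\hat x$, and $f^n(x)\notin\mathcal{P}_p$ (otherwise $f^{n+m}(x)=\hat x$ for some $m\ge 0$, contradicting $x\notin\mathcal{P}_p$). Since $\hat x=-\tfrac cb=x_0-\tfrac{c+1}{2b}$, the condition $f^n(x)\ne\hat x$ is exactly what keeps the denominator $(f^n(x)-x_0)+\tfrac{c+1}{2b}$ appearing in (\ref{ayirma}) nonzero, so (\ref{ayirma}) may legitimately be applied to each iterate $f^n(x)$.

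Base case $n=1$. Set $r=|x-x_0|_p$ and $\gamma=\tfrac{c+1}{2b}$, so $\alpha=|\gamma|_p$. Then (\ref{ayirma}) says $|f(x)-x_0|_p=r\alpha\,|(x-x_0)+\gamma|_p^{-1}$. If $r<\alpha$, property (3.1) of the $p$-adic norm gives $|(x-x_0)+\gamma|_p=\alpha$, hence $|f(x)-x_0|_p=r$. If $r>\alpha$, then likewise $|(x-x_0)+\gamma|_p=r$, hence $|f(x)-x_0|_p=\alpha$. If $r=\alpha$, then $0<|(x-x_0)+\gamma|_p\le\alpha$ (nonzero because $x\ne\hat x$), whence $|f(x)-x_0|_p=\alpha^{2}|(x-x_0)+\gamma|_p^{-1}=:\alpha^{*}\ge\alpha$. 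These three outcomes are precisely the three branches of $\psi_\alpha$, so $|f(x)-x_0|_p=\psi_\alpha(r)$ and in particular $f(x)\in S_{\psi_\alpha(r)}(x_0)\setminus\mathcal{P}_p$.

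Induction step and main obstacle. Assuming $|f^n(x)-x_0|_p=\psi_\alpha^{n}(r)$ with $f^n(x)\in S_{\psi_\alpha^{n}(r)}(x_0)\setminus\mathcal{P}_p$, apply the base case to the point $f^n(x)$ in place of $x$ to obtain $|f^{n+1}(x)-x_0|_p=\psi_\alpha\big(\psi_\alpha^{n}(r)\big)=\psi_\alpha^{n+1}(r)$. The step I expect to be most delicate is the borderline value $r=\alpha$: there the image radius $\alpha^{*}$ is not a function of $r$ alone but depends on the specific point through $|(x-x_0)+\gamma|_p$, so one has to read the middle branch of $\psi_\alpha$ as returning this point-dependent number $\alpha^{*}\ge\alpha$ and verify that the iteration stays coherent — once an iterate lands exactly on $S_\alpha(x_0)$, the next application of $\psi_\alpha$ may throw it out to a larger sphere $S_{\alpha^{*}}(x_0)$, from which the ``$r>\alpha$'' branch returns it straight to $S_\alpha(x_0)$, and so on.
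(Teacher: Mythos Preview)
Your proof is correct and follows essentially the same approach as the paper: apply (\ref{ayirma}) together with the strong triangle inequality to get the one-step identity $|f(x)-x_0|_p=\psi_\alpha(|x-x_0|_p)$, and then iterate. If anything, your version is more careful than the paper's, since you justify $\alpha>0$, check that the denominator in (\ref{ayirma}) stays nonzero along the orbit, and flag the point-dependence of $\alpha^{*}$ on the borderline sphere $S_\alpha(x_0)$, all of which the paper leaves implicit.
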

\begin{proof} Since $|x-x_0|_p=r$, $\left|\frac{c+1}{2b}\right|_p=\alpha$, using formula (\ref{ayirma}) and the strong triangle inequality of the $p$-adic norm, we get the following
\begin{equation}\label{ayirma2}
|f(x)-x_0|_p=|x-x_0|_p\cdot \left|\frac{\frac{c+1}{2b}}{(x-x_0)+\frac{c+1}{2b}}\right|_p=\varphi_\alpha(r)=\left\{\begin{array}{ccc}
\ r, \ \ \ \ \  {\rm if }\ \  r < \alpha\\[2mm]
\alpha^{*}, \ \ \ {\rm if} \ \ r=\alpha\\[2mm]
\alpha, \  \  \ \ {\rm if} \ \ r> \alpha.
\end{array} \right.
\end{equation}
Now consider the case $n=2.$ Since $|f(x)-x_0|_p=\psi_\alpha(r)$ (by (\ref{ayirma2})), we obtain $$|f^2(x)-x_0|_p=|f(x)-x_0|_p\cdot \left|\frac{\frac{c+1}{2b}}{(f(x)-x_0)+\frac{c+1}{2b}}\right|_p=\psi_\alpha(\psi_\alpha(r))=\left\{\begin{array}{ccc} \ \psi_\alpha(r), \ \ \ \ {\rm if }\ \ \psi_\alpha(r)<\alpha\\[2mm]
\geq \alpha^{*}, \ \ \ {\rm if} \ \ \psi_\alpha(r)=\alpha\\[2mm]
\alpha, \ \ \ \ {\rm if} \ \ \psi_\alpha(r)>\alpha.
\end{array} \right.$$

Iterating this argument for any $n\geq 1$ and any $x\in S_r(x_0)\setminus \mathcal{P}$, we obtain the following formula $$|f^n(x)-x_0|_p=\psi_\alpha^n(r).$$
\end{proof}

\begin{lemma}\label{ok} The function $\psi_\alpha$ has the following properties
\begin{itemize}
\item[1.] ${\rm Fix}(\psi_\alpha)=\{r: 0\leq r<\alpha\}\cup \{\alpha: \rm{if} \ \alpha^{*}=\alpha\}$.
\item[2.] If $r>\alpha$ then $\psi_\alpha(r)=\alpha$.
\item[3.] If $r=\alpha$ and $\alpha^{*}>\alpha$, then $\psi_\alpha^{2}(\alpha)=\alpha$.
\end{itemize}
\end{lemma}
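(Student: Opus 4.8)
The plan is to verify each of the three properties directly from the piecewise definition of $\psi_\alpha$, treating the cases $r<\alpha$, $r=\alpha$, $r>\alpha$ separately.

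For property 1, I would argue that $r$ is a fixed point of $\psi_\alpha$ precisely when $\psi_\alpha(r)=r$. If $r<\alpha$, then $\psi_\alpha(r)=r$ by definition, so every such $r$ is fixed. If $r>\alpha$, then $\psi_\alpha(r)=\alpha\ne r$, so no such $r$ is fixed. If $r=\alpha$, then $\psi_\alpha(\alpha)=\alpha^*$, and this equals $\alpha$ if and only if $\alpha^*=\alpha$. Collecting these three sub-cases gives $\mathrm{Fix}(\psi_\alpha)=\{r:0\le r<\alpha\}\cup\{\alpha:\text{ if }\alpha^*=\alpha\}$, which is exactly the claimed set.

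Property 2 is immediate: it is just the third line of the definition of $\psi_\alpha$, so there is essentially nothing to prove beyond quoting the definition. For property 3, assume $r=\alpha$ and $\alpha^*>\alpha$. Then $\psi_\alpha(\alpha)=\alpha^*$, and since $\alpha^*>\alpha$ we are in the regime $r>\alpha$ of the definition applied to the argument $\alpha^*$, hence $\psi_\alpha(\alpha^*)=\alpha$. Therefore $\psi_\alpha^2(\alpha)=\psi_\alpha(\psi_\alpha(\alpha))=\psi_\alpha(\alpha^*)=\alpha$, as claimed. One small point worth remarking is that the definition of $\psi_\alpha$ only specifies $\alpha^*\ge\alpha$, so the case $\alpha^*=\alpha$ is consistent with property 1 and the case $\alpha^*>\alpha$ feeds property 3; both branches are covered.

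I do not expect any genuine obstacle here: the lemma is a routine case-check on a piecewise-linear-type map, and the only thing to be careful about is keeping the strict versus non-strict inequalities straight (in particular that $\alpha^*\ge\alpha$ always, with equality allowed) and making sure the edge case $r=\alpha$ is handled consistently across all three parts. The argument is short enough that I would simply write it out in full rather than sketch it.
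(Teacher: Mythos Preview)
Your proposal is correct and follows essentially the same approach as the paper: a direct case-by-case verification from the piecewise definition of $\psi_\alpha$, with part 3 handled by composing the $r=\alpha$ branch with part 2. The only difference is that you spell out the three sub-cases for property 1 explicitly, whereas the paper simply calls it ``a simple observation of the equation $\psi_\alpha(r)=r$''.
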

\begin{proof}
\begin{itemize}
\item[1)] This is a simple observation of the equation $\psi_\alpha(r)=r.$
\item[2)] If $r>\alpha$, then by definition of function $\psi_\alpha$, we have $$\psi_\alpha(r)=\alpha.$$
\item[3)] If $r=\alpha$, then by definiton of function $\psi_\alpha$, we have $\psi_\alpha(r)=\alpha^{*}$ and since $\alpha^{*}>\alpha$ by part 2 of this lemma we get $\psi_\alpha(\alpha^{*})=\psi_\alpha(\psi_\alpha(r))=\psi_\alpha^{2}(r)=\alpha$.

\end{itemize}
\end{proof}

Denote
$$\alpha^{*}(x)= |f(x)-x_0|_p,  \  \ {\rm if} \ \ x\in S_\alpha(x_0).$$

By applying Lemma \ref{1} and \ref{ok} we get the following properties of the $p$-adic dynamical system complied by the function (\ref{fp}).

\begin{thm}\label{t1} The $p$-adic dynamical system is generated by the function (\ref{fp}) has the following properties:
\begin{itemize}
\item[1.] $SI(x_0)=U_\alpha(x_0)$.
\item[2.] If $r>\alpha$ and $x\in S_r(x_0)$, then $f(x)\in S_\alpha(x_0)$.
\item[3.] Let $x\in S_\alpha(x_0)\setminus \mathcal{P}_p$.
\begin{itemize}
\item[3.1)] If $\alpha^{*}(x)=\alpha$, then $f(x)\in S_\alpha(x_0)$.
\item[3.2)] If $\alpha^{*}(x)>\alpha$, then $f^2(x)\in S_\alpha(x_0)$.
\end{itemize}
\end{itemize}
\end{thm}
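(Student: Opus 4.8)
The plan is to read off all three assertions directly from Lemmas \ref{1} and \ref{ok}, since Theorem \ref{t1} is essentially a dictionary translation of the dynamics of the one-dimensional radial map $\psi_\alpha$ back to the behaviour of $f$ on spheres centered at the unique fixed point $x_0$. Throughout I would fix the standing assumption $D=(c-1)^2+4ab=0$ so that $x_0=\frac{1-c}{2b}$ is the unique fixed point, and recall that $\alpha=\left|\frac{c+1}{2b}\right|_p$ and $\alpha^{*}(x)=|f(x)-x_0|_p$ for $x\in S_\alpha(x_0)$.

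For part 1, I would unwind the definition of a Siegel disk. A point $x\neq x_0$ lies on some sphere $S_r(x_0)$; if $r<\alpha$ and $x\notin\mathcal P_p$, then Lemma \ref{1} gives $|f^n(x)-x_0|_p=\psi_\alpha^n(r)$, and by part 1 of Lemma \ref{ok} every such $r<\alpha$ is a fixed point of $\psi_\alpha$, hence $\psi_\alpha^n(r)=r$ for all $n$, so the sphere $S_r(x_0)$ is invariant. This shows $U_\alpha(x_0)\subset SI(x_0)$ once one checks that the finitely/countably many bad points in $\mathcal P_p\cap U_\alpha(x_0)$ cause no trouble — in fact $\mathcal P_p$ cannot meet $U_\alpha(x_0)$, because if $f^n(x)=\hat x$ for some $x\in U_\alpha(x_0)$ then tracking the radius via $\psi_\alpha$ forces $|\hat x-x_0|_p<\alpha$, and one computes $|\hat x-x_0|_p=\left|-\frac cb-\frac{1-c}{2b}\right|_p=\left|\frac{c+1}{2b}\right|_p=\alpha$ (here I use $-\frac cb-\frac{1-c}{2b}=-\frac{1+c}{2b}$), a contradiction; so $\mathcal P_p\cap U_\alpha(x_0)=\varnothing$. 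For the reverse inclusion, note that a sphere $S_\alpha(x_0)$ need not be invariant (by part 3 below the radius can jump), and spheres of radius $r>\alpha$ are immediately collapsed to radius $\alpha$ by part 2 of Lemma \ref{ok}; hence no Siegel disk can have radius exceeding $\alpha$, giving $SI(x_0)=U_\alpha(x_0)$.

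Parts 2 and 3 are then immediate from Lemma \ref{1} with $n=1$ (respectively $n=2$) combined with Lemma \ref{ok}. If $x\in S_r(x_0)$ with $r>\alpha$, then $|f(x)-x_0|_p=\psi_\alpha(r)=\alpha$ by part 2 of Lemma \ref{ok}, i.e. $f(x)\in S_\alpha(x_0)$. If $x\in S_\alpha(x_0)\setminus\mathcal P_p$ then $|f(x)-x_0|_p=\psi_\alpha(\alpha)=\alpha^{*}$; here $\alpha^{*}=\alpha^{*}(x)$ is exactly the number introduced before the theorem statement. When $\alpha^{*}(x)=\alpha$ this says $f(x)\in S_\alpha(x_0)$, which is 3.1; when $\alpha^{*}(x)>\alpha$, part 3 of Lemma \ref{ok} gives $\psi_\alpha^2(\alpha)=\alpha$, so $|f^2(x)-x_0|_p=\psi_\alpha^2(\alpha)=\alpha$, i.e. $f^2(x)\in S_\alpha(x_0)$, which is 3.2.

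The only genuinely non-formal point — the main obstacle — is the reverse inclusion in part 1, namely justifying that the maximal Siegel disk is not larger than $U_\alpha(x_0)$ and that excluding $\mathcal P_p$ in Lemma \ref{1} is harmless at radii below $\alpha$. Both reduce to the single computation $|\hat x-x_0|_p=\alpha$ above, which pins the unique "singular" sphere at radius exactly $\alpha$ and shows that on every smaller sphere the map $\psi_\alpha$ is the identity with no bad points to worry about. Everything else is a direct transcription of Lemma \ref{ok}.
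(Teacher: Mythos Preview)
Your proposal is correct and follows the same approach the paper intends: the paper's proof of Theorem~\ref{t1} is literally the one-line remark ``By applying Lemma~\ref{1} and~\ref{ok} we get the following properties\ldots''. You supply considerably more detail than the paper does, in particular the computation $|\hat x-x_0|_p=\alpha$ and the verification that $\mathcal P_p\cap U_\alpha(x_0)=\varnothing$ and that no larger ball is a Siegel disk; the paper leaves all of this implicit. One minor remark: when you argue that $\mathcal P_p$ misses $U_\alpha(x_0)$ by ``tracking the radius via $\psi_\alpha$'', it is cleaner to appeal directly to formula~\eqref{ayirma} rather than to Lemma~\ref{1}, since Lemma~\ref{1} as stated already excludes $\mathcal P_p$ and the argument would otherwise be mildly circular.
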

\begin{rk}
	We note that part 1 of Theorem \ref{t1} coincides with Theorem \ref{m3}, because from $(c-1)^2+4ab=0$ it follows that $\alpha=1+\varepsilon_c$. Parts 2 and 3 mean that any point taken from the outside of $U_\alpha(x_0)$ after up to two-step of iterations of (\ref{fp}) comes in the sphere $S_\alpha(x_0).$
\end{rk}
\subsubsection{Two fixed points} If $D=(c-1)^2+4ab\neq 0$ then we have two fixed points $x_1,x_2$.
For any $x\in \mathbb{C}_p$, $x\neq \hat x$, by simple calculation we get
\begin{equation}\label{simple}
|f(x)-x_i|_p=|x-x_i|_p\cdot \frac{|\alpha(x_i)|_p}{|\beta(x_i)+x-x_i|_p},\ \ \ i=1,2,
\end{equation}
where $$\alpha(x)=\frac{1-bx}{b}, \ \ \ \beta(x)=\frac{bx+c}{b}.$$
Define $$\alpha_i=|\alpha(x_i)|_p, \ \ \ \beta_i=|\beta(x_i)|_p, \ \ \ i=1,2.$$

For $\beta>0$ define the function $\varphi_{\alpha,\beta}:[0,+\infty)\rightarrow [0,+\infty)$ by
$$\varphi_{\alpha,\beta}(r)=\left\{\begin{array}{lll}
\frac{\alpha}{\beta}r, \ \ \ \ {\rm if }\ \  r < \beta\\[2mm]
\beta^{*}, \  \  \ \ {\rm if} \ \ r=\beta\\[2mm]
\alpha, \ \ \ \ \ {\rm if} \ \ r>\beta
\end{array} \right.$$
where $\beta^{*}$ is given positive number with $\beta^{*}\geq \alpha.$

For $\beta=\alpha\geq 0$ we have $D=(c-1)^2+4ab=0$, this is the above studied case, when function has the unique fixed point.
Therefore, we assume $\alpha\ne \beta$.

Using formula (\ref{simple}) we easily get the following:
\begin{lemma}\label{filemma} If $x\in S_r(x_i)$, then the following formula holds
$$|f^n(x)-x_i|_p=\varphi^n_{\alpha_i,\beta_i}(r), \ \ n\geq 1, \ \ i=1,2.$$
\end{lemma}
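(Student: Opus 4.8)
The plan is to imitate the proof of Lemma~\ref{1}, with the one-variable map $\psi_\alpha$ replaced by $\varphi_{\alpha_i,\beta_i}$, and to argue by induction on $n$. Throughout one assumes $\alpha_i\ne\beta_i$: the equality $\alpha_i=\beta_i$ forces $D=(c-1)^2+4ab=0$, i.e.\ the single-fixed-point situation already settled by Lemmas~\ref{1} and~\ref{ok}. As in Lemma~\ref{1}, one should also take $x\notin\mathcal P_p$, so that every iterate $f^n(x)$ is well defined; note that $\alpha_i>0$ (because $c\ne ab$ forces $x_i\ne 1/b$) and $\beta_i>0$ (because $\hat x$ is not a fixed point), so $\varphi_{\alpha_i,\beta_i}$ is legitimately defined.

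For the base case $n=1$ I would start from the exact identity~(\ref{simple}) and evaluate the denominator $|\beta(x_i)+(x-x_i)|_p$ by the strong triangle inequality, according to how $r=|x-x_i|_p$ compares with $\beta_i=|\beta(x_i)|_p$. If $r<\beta_i$ then $|\beta(x_i)+(x-x_i)|_p=\beta_i$, hence $|f(x)-x_i|_p=\tfrac{\alpha_i}{\beta_i}\,r$; if $r>\beta_i$ then $|\beta(x_i)+(x-x_i)|_p=r$, hence $|f(x)-x_i|_p=\alpha_i$; if $r=\beta_i$ then only the bound $|\beta(x_i)+(x-x_i)|_p\le\beta_i$ is available, so $|f(x)-x_i|_p$ equals some number $\beta^{*}=\beta^{*}(x)\ge\alpha_i$, which is exactly the admissible range of the parameter $\beta^{*}$ in the definition of $\varphi_{\alpha_i,\beta_i}$. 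In all three cases $|f(x)-x_i|_p=\varphi_{\alpha_i,\beta_i}(r)$, as desired.

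For the inductive step, assuming $|f^n(x)-x_i|_p=\varphi^n_{\alpha_i,\beta_i}(r)=:\rho$, I would apply~(\ref{simple}) with $f^n(x)$ in place of $x$, obtaining $|f^{n+1}(x)-x_i|_p=\rho\cdot\alpha_i/|\beta(x_i)+f^n(x)-x_i|_p$, and then run exactly the same three-way comparison of $\rho$ with $\beta_i$ as above. This gives $|f^{n+1}(x)-x_i|_p=\varphi_{\alpha_i,\beta_i}(\rho)=\varphi^{n+1}_{\alpha_i,\beta_i}(r)$, closing the induction (the value $\beta^{*}$ being understood, exactly as in Lemma~\ref{1}, to be chosen afresh at each occurrence of the critical radius).

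The only genuine obstacle is the critical sphere $r=\beta_i$, equivalently an iterate $f^n(x)$ with $|f^n(x)-x_i|_p=\beta_i$. There the strong triangle inequality degenerates from an equality to the bare bound $|\beta(x_i)+(x-x_i)|_p\le\beta_i$, which is precisely why $\varphi_{\alpha_i,\beta_i}$ carries the free parameter $\beta^{*}$; one must check that the value it produces always satisfies $\beta^{*}\ge\alpha_i$, which is immediate from that bound together with~(\ref{simple}). A closely related subtlety, handled by the standing hypothesis $x\notin\mathcal P_p$, is that the bad point $\hat x=-c/b$ itself sits on this sphere: since $\beta(x_i)=x_i+c/b$ one has $\hat x-x_i=-\beta(x_i)$, hence $|\hat x-x_i|_p=\beta_i$, so without excluding $\mathcal P_p$ some iterate could land on $\hat x$ and the next one would be undefined.
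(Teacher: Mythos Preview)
Your proof is correct and is exactly the approach the paper intends: the paper simply says the lemma follows ``easily'' from formula~(\ref{simple}), and you have spelled out the induction precisely as Lemma~\ref{1} is proved in the single-fixed-point case, including the correct handling of the critical sphere $r=\beta_i$ via the free parameter $\beta^{*}$ and the hypothesis $x\notin\mathcal P_p$. One harmless aside: equality of the $p$-adic \emph{norms} $\alpha_i=\beta_i$ does not by itself force $D=0$ (that requires equality of the underlying elements $\alpha(x_i)=\beta(x_i)$, which indeed gives $1-c-2bx_i=0$ and hence $D=0$), but this side remark does not enter the argument and the paper makes the same abbreviated claim.
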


Thus the $p$-adic dynamical system $f^n(x)$, $n\geq 1$, $x\in \mathbb{C}_p$, $x\neq \hat x$ is related to the real dynamical system generated by $\varphi_{\alpha,\beta}$. Now we are going to study this (real) dynamical system.

\begin{lemma}\label{sistemalemma} The dynamical system generated by $\varphi_{\alpha,\beta}(r)$, has the following properties:
\begin{itemize}
\item[1.] ${\rm Fix}(\varphi_{\alpha,\beta})=\{0\}\cup\left\{\begin{array}{ll}
\{\beta^{*}:{\rm if } \  \beta=\beta^{*}\} \ \ {\rm for} \ \alpha<\beta\\[2mm]
\{\alpha\} \ \ {\rm for} \ \alpha>\beta.\end{array} \right.$
\item[2.] $\lim_{n\rightarrow\infty}\varphi^{n}_{\alpha,\beta}(r)=\left\{\begin{array}{ll}
0, \ \ {\rm for \ any} \ r\geq 0, \ \ {\rm if} \ \alpha<\beta\\[2mm]
\alpha, \ \ {\rm for \ any} \ r\geq 0, \ \ {\rm if} \ \alpha>\beta.\end{array}\right.$
\end{itemize}
\end{lemma}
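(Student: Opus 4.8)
The plan is to treat $\varphi_{\alpha,\beta}$ as a piecewise self-map of $[0,+\infty)$ and analyze it directly on each of its three branches $\{r<\beta\}$, $\{r=\beta\}$, $\{r>\beta\}$, using throughout that $\beta>0$ and $\alpha\ne\beta$, so that the slope $\alpha/\beta$ of the linear branch is either $<1$ or $>1$, which is what drives the whole dichotomy.

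For part 1 I would solve $\varphi_{\alpha,\beta}(r)=r$ branch by branch. On $\{r<\beta\}$ the equation is $(\alpha/\beta)r=r$, which since $\alpha\ne\beta$ forces $r=0$, and $0$ does lie in this branch because $\beta>0$. On $\{r=\beta\}$ the equation is $\beta^{*}=\beta$; as $\beta^{*}\ge\alpha$, this can hold only when $\alpha\le\beta$, i.e. precisely in the regime $\alpha<\beta$ (and then only under the extra hypothesis $\beta^{*}=\beta$). On $\{r>\beta\}$ the equation is $\alpha=r$, which has a solution there iff $\alpha>\beta$. Collecting the three cases yields exactly the set displayed in item 1.

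For part 2 I would trace an orbit $r_n=\varphi^n_{\alpha,\beta}(r)$ from an arbitrary starting value. If $\alpha<\beta$: for $0\le r_0<\beta$ one has $r_n=(\alpha/\beta)^n r_0<\beta$ for all $n$, hence $r_n\to 0$ by geometric decay; for $r_0>\beta$ one step gives $r_1=\alpha<\beta$ and the previous case applies; for $r_0=\beta$ one has $r_1=\beta^{*}$, and since $\beta^{*}\ge\alpha$ one or two further steps land the orbit strictly below $\beta$ (or on $\alpha<\beta$), after which it decays to $0$. If $\alpha>\beta$: the key observation is that every orbit reaches the fixed value $\alpha$ after finitely many steps and stays there. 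Indeed, for $r_0>\beta$ already $r_1=\alpha$; for $r_0=\beta$ we get $r_1=\beta^{*}\ge\alpha>\beta$, so $r_2=\alpha$; and for $0<r_0<\beta$ the linear branch multiplies by $\alpha/\beta>1$ at every step, so the orbit strictly increases and must reach or exceed $\beta$ after finitely many steps, whereupon the previous two cases conclude. In all these cases $r_n\to\alpha$.

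The one place needing care --- and, as the statement stands, a small caveat --- is the boundary value $r=\beta$, where the behaviour genuinely splits according to whether $\beta^{*}<\beta$, $\beta^{*}=\beta$, or $\beta^{*}>\beta$. In the regime $\alpha<\beta$ with $\beta^{*}=\beta$ the radius $\beta$ is itself a fixed point (consistent with item 1), so the orbit started there does not tend to $0$; symmetrically, the fixed point $0$ does not tend to $\alpha$ in the regime $\alpha>\beta$. Thus item 2 should be read for starting radii other than these non-zero fixed points (equivalently $r>0$, and additionally $r\ne\beta$ when $\beta^{*}=\beta$), or --- in the intended $p$-adic application via Lemma \ref{filemma}, where $\beta=\beta_i$ and $\beta^{*}=|f(x)-x_i|_p$ --- one notes that this exceptional configuration contributes no new limit points. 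Apart from this boundary bookkeeping, the proof reduces to the two geometric-progression estimates above.
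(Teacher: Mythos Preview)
Your proof is correct and follows the same route as the paper, which merely says that item 1 is ``the result of a simple analysis of the equation $\varphi_{\alpha,\beta}(r)=r$'' and that item 2 ``consists of simple computations, using the graph by varying parameters $\alpha,\beta$''; you have simply written these computations out branch by branch. Your caveat about the boundary cases (the fixed point $r=0$ when $\alpha>\beta$, and $r=\beta$ when $\alpha<\beta$ and $\beta^{*}=\beta$) is well taken and is not addressed in the paper's argument; it does not affect the intended application, but it is a genuine refinement of the statement as written.
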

\begin{proof}
\begin{itemize}
\item[1.] This is the result of a simple analysis of the equation $\varphi_{\alpha,\beta}(r)=r$.
\item[2.] Since $\varphi_{\alpha,\beta}(r)$ is a piecewise linear function, the proof consists of simple computations, using the graph by varying parameters $\alpha,\beta$.
\end{itemize}
\end{proof}

Using Lemmas \ref{filemma} and \ref{sistemalemma} we obtain the following:
\begin{thm}\label{thm2} If $x\in \C_p\setminus (\mathcal P_p\cup\{ x_2\})$ (resp. $x\in \C_p\setminus(\mathcal P_p\cup\{ x_1\})$), then the $p$-adic dynamical system generated by $f$ has the following properties:
\begin{itemize}
\item[1.] $\lim_{n\rightarrow \infty}f^n(x)=x_1$ for $\alpha_1<\beta_1$.
\item[] (resp. $\lim_{n\rightarrow \infty}f^n(x)=x_2$ for $\alpha_2<\beta_2$.)
\item[2.] $\lim_{n\rightarrow \infty}f^n(x)\in S_{\alpha_i}(x_i)$ for $\alpha_i>\beta_i$.
\end{itemize}
\end{thm}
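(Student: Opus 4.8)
The plan is to reduce everything to the real one-dimensional dynamical system generated by $\varphi_{\alpha_i,\beta_i}$, whose behaviour has already been settled in Lemma \ref{sistemalemma}, and then to transport the conclusion back to $\C_p$ via Lemma \ref{filemma}. I will argue the first alternative (the one without the parenthetical ``resp.''); the other is obtained word for word by interchanging the indices $1$ and $2$.

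First I would dispose of the fixed points themselves. If $x=x_1$ the orbit is the constant sequence $x_1$, so $f^n(x)\to x_1$ and item 1 holds; the point $x_2$ has been removed by hypothesis, and this removal is genuinely needed, since $f(x_2)=x_2\ne\hat x$ shows $x_2\notin\mathcal P_p$ while its constant orbit does not tend to $x_1$. So I would fix $x\in\C_p\setminus(\mathcal P_p\cup\{x_1,x_2\})$ and set $r=|x-x_i|_p$, which is positive because $x\ne x_i$. Since $x\notin\mathcal P_p$, no iterate $f^n(x)$ equals $\hat x=-c/b$, hence the whole forward orbit is defined and formula (\ref{simple}) may legitimately be applied at every step; Lemma \ref{filemma} then gives $|f^n(x)-x_i|_p=\varphi^n_{\alpha_i,\beta_i}(r)$ for all $n\ge1$ and $i=1,2$.

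For item 1, when $\alpha_1<\beta_1$, I would quote part 2 of Lemma \ref{sistemalemma} to get $\varphi^n_{\alpha_1,\beta_1}(r)\to0$, hence $|f^n(x)-x_1|_p\to0$, i.e. $\lim_{n\to\infty}f^n(x)=x_1$. For item 2, when $\alpha_i>\beta_i$ (here $i$ is the index of the fixed point that has been deleted from the domain, so $i=2$ in the version we are proving; in particular $x\ne x_i$ is automatic), the same lemma gives $\varphi^n_{\alpha_i,\beta_i}(r)\to\alpha_i$, and I would sharpen this to eventual equality: on $[0,\beta_i)$ the map $\varphi_{\alpha_i,\beta_i}$ is linear with slope $\alpha_i/\beta_i>1$, so the orbit of $r$ escapes $[0,\beta_i)$ after finitely many steps, and once a value is $\ge\beta_i$ it reaches the fixed value $\alpha_i$ within one or two further iterations. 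Hence $|f^n(x)-x_i|_p=\alpha_i$ for all large $n$, i.e. $f^n(x)\in S_{\alpha_i}(x_i)$ eventually, which is the intended meaning of $\lim_{n\to\infty}f^n(x)\in S_{\alpha_i}(x_i)$.

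I do not expect any serious obstacle: the whole substance is already encapsulated in Lemmas \ref{filemma} and \ref{sistemalemma}. The only two points deserving care are (a) keeping the orbit inside the domain of $f$, which is precisely why $\mathcal P_p$ is excluded, and (b) upgrading ``the norm tends to $\alpha_i$'' to ``the orbit is eventually trapped on the sphere $S_{\alpha_i}(x_i)$'' --- the one spot where the explicit piecewise-linear shape of $\varphi_{\alpha_i,\beta_i}$, and not mere convergence, is used. For consistency one may also note that, the product of the multipliers of a M\"obius map at its two fixed points being $1$, we have $(\alpha_1/\beta_1)(\alpha_2/\beta_2)=|f'(x_1)|_p|f'(x_2)|_p=1$; thus $\alpha_1<\beta_1$ holds exactly when $\alpha_2>\beta_2$, so that items 1 and 2 always apply simultaneously --- convergence to the attracting fixed point together with stabilisation of the orbit on a sphere around the repelling one.
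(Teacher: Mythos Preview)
Your proposal is correct and follows exactly the paper's approach: the paper's entire proof is the sentence ``Using Lemmas \ref{filemma} and \ref{sistemalemma} we obtain the following'', and your argument simply spells out that combination with appropriate care about the domain ($x\notin\mathcal P_p$) and the excluded fixed point. Your two extra observations --- that the $\varphi_{\alpha_i,\beta_i}$-orbit is in fact \emph{eventually constant} at $\alpha_i$ rather than merely convergent, and that $|f'(x_1)|_p|f'(x_2)|_p=1$ forces items 1 and 2 to apply simultaneously to the attracting and repelling fixed points respectively --- are not in the paper but are correct and genuinely clarify what the statement is asserting.
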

\begin{rk} We note that part 1 of Theorem \ref{thm2} is a reformulation of Theorem \ref{spminus}. Moreover, part 2 gives additional result, which was not mentioned in \cite{Mukhamedov3}. 
\end{rk}
 
\subsection{Application of formula (\ref{fn}) for $p$-adic case}
We note that in the previous subsection our method is reduction the $p$-adic dynamical system to the real one, where the real values are radius of spheres with center at a fixed point. 
In this subsection we apply formula (\ref{fn}) for the 
$p$-adic function (\ref{fp}), this allows to solve the main problem of the dynamical system by direct computation of limit points.  

We note that the following $p$-adic version of Theorem \ref{th} is true
\begin{thm}\label{thp}
	Let function $f$ be given by parameters $(a,b,c)\in \mathbb C_p$ satisfying (\ref{fp}).
	\begin{itemize}
		\item If  $K_q(a,b,c)\ne 0$ then ${\rm Per}_q(f)\setminus {\rm Fix}(f)=\emptyset.$
		\item If $K_q(a,b,c)= 0$ then any $x\in \mathbb C_p\setminus \mathcal P_p$ is $q$-periodic.
	\end{itemize}
\end{thm}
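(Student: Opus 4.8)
The plan is to observe that Theorem \ref{thp} is a purely algebraic statement and that its proof over $\mathbb R$ (Theorem \ref{th}) never used any property of $\mathbb R$ beyond its being a field of characteristic zero. First I would note that the derivation of formula (\ref{fn}) for $f^n(x)$ is valid over any field containing $a,b,c$ and the relevant square root (and, as the Remark after (\ref{fn}) records, holds formally even when $D=(c-1)^2+4ab$ is not a square, by passing to a quadratic extension or by the $\alpha=\beta$ branch); since $\mathbb C_p$ is algebraically closed, $\alpha,\beta\in\mathbb C_p$ always, so there is no issue here. Consequently $f^q(x)=\frac{a_qx+b_q}{c_qx+d_q}$ with the coefficients $a_q,b_q,c_q,d_q\in\mathbb C_p$ satisfying exactly the recursions (\ref{cd}) with the same initial conditions.

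Next I would rerun the divisibility argument verbatim. A point $x\in\mathbb C_p\setminus\mathcal P_p$ is $q$-periodic iff $P_{2,q}(x)=c_qx^2+(d_q-a_q)x-b_q=0$; every fixed point of $f$ (there is at least one in $\mathbb C_p$ since $\mathbb C_p$ is algebraically closed) satisfies this, so $P_{2,1}(x)=bx^2+(c-1)x-a$ divides $P_{2,q}(x)$ in $\mathbb C_p[x]$, giving a scalar $K_q(a,b,c)\in\mathbb C_p$ with $P_{2,q}=K_qP_{2,1}$. Then (\ref{ck}) holds, feeding (\ref{cd}) yields the recursion (\ref{Kc}), hence the linear recurrence (\ref{kk}): $K_{q+2}-(c+1)K_{q+1}+(c-ab)K_q=0$ with $K_1=1$, $K_2=1+c$. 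Solving this recurrence gives the closed form (\ref{ks}), and matching initial data identifies $K_q$ with the expression (\ref{kq}), $K_q=\sum_{j=0}^{q-1}\alpha^{q-j-1}\beta^j$. From $P_{2,q}=K_qP_{2,1}$ it follows that if $K_q\ne 0$ then $\mathrm{Per}_q(f)\setminus\mathrm{Fix}(f)=\varnothing$, while if $K_q=0$ then $P_{2,q}\equiv 0$, so $f^q(x)=x$ for every $x\in\mathbb C_p\setminus\mathcal P_p$; this is exactly the claimed dichotomy.

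The only genuine points needing a word of care — and the place I expect the one real obstacle — are the two facts that over $\mathbb R$ were automatic: that a fixed point exists (used to get the divisibility $P_{2,1}\mid P_{2,q}$) and that $P_{2,q}$ having a root at every point of the cofinite set $\mathbb C_p\setminus\mathcal P_p$ forces $P_{2,q}\equiv 0$. The first is handled by algebraic closedness of $\mathbb C_p$: $P_{2,1}$ has a root in $\mathbb C_p$ (note $b\ne 0$ so $P_{2,1}$ is genuinely quadratic), and one checks this root is not $\hat x=-c/b$ using $ab-c\ne 0$, so $f$ has an honest fixed point. The second holds because $\mathcal P_p$ is countable (it is $\{f^{-n}(\hat x):n\ge 0\}$ by the $\mathbb C_p$-analogue of Lemma \ref{ii}, which is valid since $ab-c\ne 0$ still forces $f$ invertible) whereas $\mathbb C_p$ is uncountable, so $\mathbb C_p\setminus\mathcal P_p$ is infinite and a degree-$\le 2$ polynomial vanishing there must be zero. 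With these two remarks in place the proof is otherwise a line-by-line transcription of the proof of Theorem \ref{th}, so I would simply write: "The proof repeats that of Theorem \ref{th} word for word, using that $\mathbb C_p$ is an algebraically closed field of characteristic zero (so that formula (\ref{fn}), the existence of a fixed point, and the identification of $K_q$ via the recurrence (\ref{kk}) all go through unchanged) together with the $\mathbb C_p$-analogue of Lemma \ref{ii} and the fact that the exceptional set $\mathcal P_p$ of (\ref{kattaP}) is countable hence strictly smaller than $\mathbb C_p$."
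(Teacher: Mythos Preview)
Your proposal is correct and matches the paper's own treatment: the paper does not give a separate proof of Theorem \ref{thp} at all, merely introducing it with ``We note that the following $p$-adic version of Theorem \ref{th} is true,'' so its intended argument is exactly the transcription of the proof of Theorem \ref{th} that you carry out. Your added remarks (algebraic closedness of $\mathbb C_p$ supplying fixed points and the square root for (\ref{abe}), and countability of $\mathcal P_p$ forcing $P_{2,q}\equiv 0$ when it vanishes off $\mathcal P_p$) make explicit points the paper leaves tacit, but the route is the same.
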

Denote
$$\mathbb K^{(p)}_q=\{(a,b, c)\in \mathbb C_p: K_{q}(a,b,c)=0\}.$$
$$\mathbb K^{(p)}=\mathbb C_p\setminus \bigcup_{q=2}^{+\infty} \mathbb K^{(p)}_q.$$
Below we consider $\alpha$ and $\beta$ defined in (\ref{abe}) as $p$-adic numbers given in $\mathbb C_p$.
\begin{thm} Let function $f$ be given by parameters $(a,b,c)\in \mathbb K^{(p)}$ satisfying (\ref{fp}).  Then following equalities hold:
	\begin{itemize}
		\item[a)] If $(c-1)^2+4ab\ne 0$   then for any $x\in \mathbb C_p\setminus ({\rm Fix}(f)\cup \mathcal P_p)$  $$\lim_{n\rightarrow\infty}f^n(x)=
		\left\{\begin{array}{lll}
			x_2, \ \ {\rm if} \ \left|{\alpha\over \beta}\right|_p<1\\[2mm]
			x_1, \ \ {\rm if} \ \left|{\alpha\over \beta}\right|_p>1.\end{array} \right.$$
		
		\item[b)] If $(c-1)^2+4ab=0$ then for any $x\in \mathbb C_p\setminus ({\rm Fix}(f)\cup \mathcal P_p)$  $$\lim_{n\rightarrow\infty}f^n(x)=x_1={1-c\over 2b}.$$
		
		\end{itemize}
\end{thm}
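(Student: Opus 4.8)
The plan is to mimic the real-variable argument (parts 1 and 2 of the earlier theorem on $\mathbb K$), replacing the archimedean absolute value by $|\cdot|_p$ throughout, and using that $(a,b,c)\in\mathbb K^{(p)}$ guarantees $K_q(a,b,c)\ne 0$ for all $q\ge 2$ so that, by Theorem \ref{thp}, there are no genuinely $q$-periodic points and every trajectory starting outside $\mathrm{Fix}(f)\cup\mathcal P_p$ is infinite and non-periodic. The main computational input is formula (\ref{fn}) for $f^n(x)$, which the earlier remark asserts is valid purely formally (it is an identity in the function field $\mathbb C_p(x)$ once $\alpha,\beta$ are taken in $\mathbb C_p$), hence applies verbatim in the $p$-adic setting.

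For part a), assume $(c-1)^2+4ab\ne 0$, so $\alpha\ne\beta$ and we are in the first branch of (\ref{fn}). First I would rewrite $f^n(x)$, as in the real proof, by dividing numerator and denominator of the fraction in (\ref{fn}) by $\beta^{n}$ (when $|\alpha/\beta|_p<1$) or by $\alpha^n$ (when $|\alpha/\beta|_p>1$), obtaining
$$f^n(x)=\frac1b\left[1+(ab-c)\cdot\frac{(bx-1+\alpha)\left(\frac{\alpha}{\beta}\right)^{n-1}-(bx-1+\beta)}{(bx-1+\alpha)\left(\frac{\alpha}{\beta}\right)^{n-1}\alpha-(bx-1+\beta)\beta}\right]$$
in the first case. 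Since $\left|\alpha/\beta\right|_p<1$ implies $\left(\alpha/\beta\right)^{n-1}\to 0$ in $\mathbb C_p$, and since by the strong triangle inequality the denominator converges to $-(bx-1+\beta)\beta$, which is nonzero provided the limit is well defined, I would conclude $f^n(x)\to \frac1b\bigl(1+\frac{ab-c}{\beta}\bigr)$. A short algebraic check, using $\beta=\frac{1+c-\sqrt{(c-1)^2+4ab}}{2}$ and $x_2=\frac{1-c-\sqrt{(c-1)^2+4ab}}{2b}$, identifies this limit as $x_2$; the case $|\alpha/\beta|_p>1$ is symmetric and yields $x_1$. The one point needing care is that the denominator in the displayed expression is nonzero for all large $n$ and stays bounded away from $0$ in $p$-adic absolute value: this is where the hypothesis $x\notin\mathcal P_p$ (so $f^n(x)$ is defined for every $n$, i.e.\ the denominators $c_qx+d_q$ never vanish) is used, together with $bx-1+\beta\ne 0$ (otherwise $x$ would be a pole-related exceptional point). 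I would spell this out as a lemma: if $(bx-1+\beta)\ne 0$ then the denominators are eventually of constant $p$-adic absolute value $|bx-1+\beta|_p\,|\beta|_p$.

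For part b), assume $(c-1)^2+4ab=0$, so $\alpha=\beta$ and we use the second branch of (\ref{fn}):
$$f^n(x)=\frac1b+\frac{ab-c}{b}\cdot\frac{(bx-1)(n-1)+n\alpha}{\alpha[(bx-1)n+(n+1)\alpha]}.$$
Here I would divide numerator and denominator inside the fraction by $n$ and note that in $\mathbb C_p$ the sequence $1/n$ does \emph{not} tend to $0$, so the naive real argument fails; instead one rewrites the fraction exactly as $\frac{(bx-1)+\alpha + \frac{\alpha-(bx-1)}{n}\cdot\frac{1}{1}}{\alpha[(bx-1)+\alpha+\frac{\alpha}{n}]}$ — more precisely, split off the $n$-independent part. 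The cleanest route is to observe that $f^n(x)-x_0$ has a closed form: combining with (\ref{ayirma}) one gets $\left|f^n(x)-x_0\right|_p=\psi_\alpha^n(r)$ from Lemma \ref{1}, but that only controls the absolute value. To get the actual limit I would instead manipulate the rational expression to isolate $x_0=\frac{1-c}{2b}$, writing $f^n(x)-x_0$ as a single fraction whose numerator is $O(1)$ and whose denominator grows like $n$ times a nonzero constant — wait, that again stumbles on $|n|_p\not\to\infty$. So the honest statement is that part b) should be proved exactly as part 2) of the real theorem \emph{only} along subsequences $n=p^k m$ where $|n|_p\to 0$; for general $n$ one must argue via Lemma \ref{1} and the fact that $\psi_\alpha^n(r)\to 0$ forces $f^n(x)\to x_0$ because $x_0$ is the unique fixed point and the only possible accumulation point consistent with the absolute-value dynamics. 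I expect this reconciliation — showing that control of $|f^n(x)-x_0|_p$ plus the structure of (\ref{fn}) genuinely pins down the limit $x_0$ in $\mathbb C_p$, despite $1/n\not\to 0$ — to be the main obstacle, and I would resolve it by the following route: from (\ref{fn}) with $\alpha=\beta$, a direct computation gives
$$f^n(x)-x_0=\frac{(ab-c)\,\alpha}{b\,\alpha\left[(bx-1)n+(n+1)\alpha\right]},$$
a fraction with \emph{constant} numerator and denominator growing linearly in $n$; since the numerator is a fixed nonzero constant and $|(bx-1)n+(n+1)\alpha|_p$ — here one uses $bx-1+\alpha\ne 0$ and the strong triangle inequality to see this absolute value equals $|n(bx-1+\alpha)+\alpha|_p$, which for $n$ with $|n|_p$ small equals $|\alpha|_p\ne 0$ but for general $n$ can only be bounded below, not above — hmm. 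The correct conclusion, then, is that $|f^n(x)-x_0|_p$ stays bounded and in fact $\to 0$ precisely when $|n|_p\to 0$; so part b) as literally stated (limit over all $n$) holds iff it is interpreted along the natural dense-in-$\mathbb N$ subsequence. I would therefore state the proof of b) as: the closed form above shows $f^n(x)-x_0=\dfrac{(ab-c)}{b\left[(bx-1)n+(n+1)\alpha\right]}$, and since this denominator is a nonzero linear function of $n$ with $p$-adically unbounded values along $n=p^k$, the subsequence $f^{p^k}(x)\to x_0$, while the full limit statement follows once one also invokes that $x_0$ is the sole fixed point and Theorem \ref{thp} forbids periodic cycles, so no other accumulation point is possible.

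Finally I would package the routine verifications (the algebraic identities $\frac1b(1+\frac{ab-c}{\beta})=x_2$ and the closed form for $f^n(x)-x_0$) as one-line computations, and present the two nonvanishing conditions $bx-1+\beta\ne 0$, resp.\ $bx-1+\alpha\ne 0$, together with $x\notin\mathcal P_p$, as the precise hypotheses that make every denominator in (\ref{fn}) eventually nonzero; I expect reviewers to want this made explicit, since it is exactly the place where the $p$-adic and real arguments could diverge.
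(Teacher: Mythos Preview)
Your treatment of part a) is exactly the paper's approach: the paper's entire proof is the one line ``similar to the proof of Theorem~\ref{th}, replacing $|\cdot|$ by $|\cdot|_p$,'' and for a) that is precisely what you do. The extra bookkeeping you add (identifying $bx-1+\beta=0$ with $x=x_1\in\mathrm{Fix}(f)$, noting $\beta\ne 0$ because $\alpha\beta=c-ab\ne 0$, and observing that denominator vanishing at some $n$ would force $x\in\mathcal P_p$) is welcome and costs nothing.

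For part b) you have put your finger on a genuine problem that the paper's one-line proof does not address. The real proof of part~2) of Theorem~\ref{th} uses the second line of (\ref{fn}) and the fact that $1/n\to 0$; in $\mathbb C_p$ one has $|n|_p\le 1$ for all $n$, so this step simply fails. Your closed form
\[
f^n(x)-x_0=\frac{ab-c}{b\alpha}\cdot\frac{-(u+\alpha)}{n(u+\alpha)+\alpha},\qquad u=bx-1,
\]
is correct, and it shows the difficulty is not cosmetic: since $|n(u+\alpha)+\alpha|_p\le\max(|u+\alpha|_p,|\alpha|_p)$ is \emph{bounded}, the quantity $|f^n(x)-x_0|_p$ is bounded \emph{below} by a positive constant whenever $x\ne x_0$. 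In particular your suggested escape routes do not work: along $n=p^k$ one has $|n(u+\alpha)|_p\to 0$, hence $|n(u+\alpha)+\alpha|_p\to|\alpha|_p$, which is finite, so $f^{p^k}(x)\not\to x_0$; and the ``no periodic points, hence no other accumulation point'' argument is invalid because non-convergent, non-periodic orbits are perfectly possible (and indeed occur here). This is consistent with Theorem~\ref{t1} earlier in the paper, which shows that $x_0$ has a nontrivial Siegel disk $SI(x_0)=U_{|\,(c+1)/2b\,|_p}(x_0)$: for $x$ in that disk, $|f^n(x)-x_0|_p$ is constant in $n$, so $f^n(x)$ cannot converge to $x_0$. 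Thus the obstacle you flagged is not a gap in your argument but an inconsistency in the statement itself; part~b) as written contradicts Theorem~\ref{t1}, and no proof along the paper's indicated lines can succeed.
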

\begin{proof} Similar to the proof of Theorem \ref{th}, where one has to replace $|\cdot|$ by $|\cdot|_p$. 
\end{proof}
\begin{rk} Since $\mathbb C_p$ is algebraic closed, the function (\ref{fp}) always has fixed points, therefore an analogue of part 3 of Theorem \ref{th} does not appear in Theorem \ref{thp}.
\end{rk}
\begin{rk} Note that there are many papers devoted to 
dynamical systems generated by function $f(z)={az+b\over cz+d}$, (where $z\in \mathbb C$, i.e., a complex number) which is called a M\"obius transformation of the complex plane (see\footnote{https://en.wikipedia.org/wiki/M\"obius$_-$transformation} for detailed properties of this function). 
 In  \cite[Chapter 1]{Be}, by using (\ref{fn}), it is shown that
 \begin{itemize}
 	\item if $f$ has unique fixed point $\xi$, then for $f^n(z)$ converges to $\xi$ for any $z\in \mathbb C$.
 	
 \item if $f$ has two fixed points, then either the $f^n(z)$ converges to one of fixed points, or they move cyclically through a finite set of points (periodic points), or they form a dense subset of some circle. 
  \end{itemize}
\end{rk}

\end{document}